\theoremstyle{plain}
\newtheorem{theorem}{Theorem}[section]
\newtheorem{lemma}[theorem]{Lemma}
\newtheorem{corollary}[theorem]{Corollary}
\newtheorem{problem}[theorem]{Problem}
\newtheorem{assumption}[theorem]{Assumption}
\theoremstyle{definition}
\newtheorem{remark}[theorem]{Remark}
\newtheorem{example}[theorem]{Example}
\numberwithin{equation}{section}
\newcommand{\rest}{\left.\kern-2\nulldelimiterspace\right|_}
\newcommand{\norm}[2]{\left|#1\right|_{#2}}
\newcommand{\Id}{{\mathbf1}}
\newcommand{\indf}{1}
\newcommand{\p}{\partial}
\newcommand*{\Bigcdot}{\raisebox{-.25ex}{\scalebox{1.25}{$\cdot$}}}
\newcommand{\clC}{{\mathcal C}}
\newcommand{\clE}{{\mathcal E}}
\newcommand{\clG}{{\mathcal G}}
\newcommand{\clH}{{\mathcal H}}
\newcommand{\clJ}{{\mathcal J}}
\newcommand{\clL}{{\mathcal L}}
\newcommand{\clN}{{\mathcal N}}
\newcommand{\clP}{{\mathcal P}}
\newcommand{\clQ}{{\mathcal Q}}
\newcommand{\clR}{{\mathcal R}}
\newcommand{\clX}{{\mathcal X}}
\newcommand{\clY}{{\mathcal Y}}
\newcommand{\bbN}{{\mathbb N}}
\newcommand{\bbR}{{\mathbb R}}
\newcommand{\bfC}{{\mathbf C}}
\newcommand{\bfJ}{{\mathbf J}}
\newcommand{\fkD}{{\mathfrak D}}
\newcommand{\fkI}{{\mathfrak I}}
\newcommand{\fkV}{{\mathfrak V}}
\newcommand{\fkX}{{\mathfrak X}}
\newcommand{\rmD}{{\mathrm D}}
\newcommand{\bfu}{{\mathbf u}}
\newcommand{\bfy}{{\mathbf y}}
\newcommand{\rmd}{{\mathrm d}}
\newcommand{\rme}{{\mathrm e}}
\newcommand{\rmf}{{\mathrm f}}
\definecolor{DarkBlue}{rgb}{0,0.08,0.45}
\definecolor{DarkRed}{rgb}{.65,0,0}
\definecolor{applegreen}{rgb}{0.55, 0.71, 0.0}
\newcounter{mymac@matlab}
\newcommand{\matlab}{MATLAB%
   \ifnum\value{mymac@matlab}<1%
   \textregistered%
   \setcounter{mymac@matlab}{1}%
   \fi%
  }
\providecommand{\argmin}{\operatorname*{argmin}}
\newcommand{\black}{ \color{black} }
\begin{document}
\title{Remarks on finite and infinite time-horizon optimal control problems}
\author{S\'ergio S.~Rodrigues$^{\tt1}$ }
\thanks{
\vspace{-1em}\newline\noindent
{\sc MSC2020}: 49N35, 93B52, 93B45, 49N20, 34H05, 35Q93.
\newline\noindent
{\sc Keywords}: optimal control, finite and infinite time-horizon,  stabilization.
\newline\noindent
$^{\tt1}$ Johann Radon Institute for Computational and Applied Mathematics, \"OAW, Altenbergerstrasse~69, 4040~Linz, Austria.
\newline\noindent  
{\sc Email}:
{\small\tt sergio.rodrigues@ricam.oeaw.ac.at}%
}

\begin{abstract}
The minimization of energy-like cost functionals is addressed in the context of optimal control problems. For a general class of dynamical systems, with possibly unstable and nonlinear free dynamics, it is shown that a sequence of solutions of finite time-horizon optimal control problems approximates a solution of  the analog infinite time-horizon  problem. The latter solution and corresponding optimal cost value function are not  assumed to be known a-priori.  Numerical simulations are presented validating the theoretical findings for several examples, including  systems governed by both ordinary and partial differential equations. 
\end{abstract}

\maketitle

\pagestyle{myheadings} \thispagestyle{plain} \markboth{\sc S. S. Rodrigues}
{\sc Remarks on FTH and ITH optimal control}

\maketitle

\section{Introduction}
The goal of this manuscript is the investigation of the behavior of the solutions of finite time-horizon (FTH) control problems as the time-horizon increases, for a general class of dynamical systems, possibly unstable and nonlinear.
The main result
states that the solutions a sequence of FTH optimal control problems, minimizing a given  energy-like cost functional, do approximate a solution of the analog infinite time-horizon (ITH)  problem.
The possibility of such an approximation is important for applications where, in general, the computation of an ITH solution is numerically unfeasible and we have to perform the computations in a FTH. This is done, for example, in stabilization strategies as receding horizon control (RHC) ~\cite{AzmiKun19,KunPfeiffer20,MayneMichalska90,LiYanShi17}, also known as model predictive control  (MPC)~\cite{GruneSchaSchi22,MayneRawlRaoScok00}, where a sequence of open-loop optimal control problems is solved
in a large enough FTH in order to capture the stabilizing property of the IFT optimal control. See also~\cite{GrunePannek09} for discrete time systems.
At this point we should mention that RHC can also be useful for large FTHs, since computations are faster for smaller FTHs and it can increase the robustness of the computed control~\cite{DontchevKolKraVelVuo20}.

Features of solutions of ITH optimal control problems have been investigated in the literature as well as conditions under which such features can be deduced from sequences of analog FTH problems, as the time-horizon diverges to infinity. Concerning~{\sc ode}s, already in~\cite{Halkin74} the optimality conditions for ITH problems have been investigated, and a comparison has been made with appropriate FTH control problems. For more recent works we refer to~\cite[sect.~3]{CannarsaFrankowska18}, \cite{AseevVeliov15,Aseev18,OrlovRov22,AseevBesovKrya12} and references therein. 
In the context of~{\sc pde}s we can mention the recent work~\cite[sect.~3]{CasasKunisch22} where appropriately constructed FTH problems are used to derive a ``limit'' optimality system for an a-priori fixed ITH solution. In particular, the FTH cost functionals in~\cite{CasasKunisch22} depend on the fixed ITH optimal solution. In this paper the goal is different and the FTH cost functionals do not depend on any particular solution of the ITH problem. Essentially, without assuming that an ITH optimal solution is known a-priori, we want to know whether such a solution can be approximated by a sequence of FTH ones. At this point, we must mention that even if we know a solution of the ITH, its approximation is still an important problem, in particular, for infinite-dimensional systems. We refer to~\cite{BanKunisch84,ZwartMorIft20} addressing (different approaches on) the approximation of the solution of  algebraic Riccati equations giving us the optimal feedback control for the case of autonomous linear dynamics and quadratic cost functionals.

The result in this manuscript, concerning a general class of possibly unstable and nonlinear systems, can be related to the result in~\cite[Ch.~III, sect.~6.3]{Lions71}, where a class of stable linear systems is considered.
 
We can also mention that ITH control problems are important in economic sciences. We refer to~\cite[Intro.]{AseevBesovKrya12} where such problems are discussed as applications to economic models/systems which do not propose a natural choice for the planning time-horizon, being assumed to operate for indefinitely long time.

\bigskip\noindent
{\bf Contents and notation.}
We prove the main result in section~\ref{S:optimalcontrol} under general assumptions on our dynamical system. Such assumptions are discussed in section~\ref{S:rmksAssumptions}  as well as the class of cost functionals considered. In section~\ref{S:numerics} we present  numerical examples highlighting several points of the result. These include both illustrative autonomous and nonautonomous {\sc ode} models as well as an autonomous  {\sc pde} model. 
Finally, section~\ref{S:finalremks} gathers concluding remarks.

\medskip
Concerning notation, we write~$\bbR$ and~$\bbN$ for the sets of real numbers and nonnegative
integers, respectively. Then, we denote~$\bbR_s\coloneqq(s,+\infty)$, for~$s\ge0$, and~$\bbN_+\coloneqq\bbN\setminus\{0\}$. The closure of a subset~$I\subseteq\bbR_0$ is denoted~$\overline I$.

Given Banach spaces~$X$ and~$Y$, we write $X\xhookrightarrow{} Y$ if the inclusion
$X\subseteq Y$ holds and is continuous.
The space of continuous functions from~$X$ into~$Y$ is denoted by~$\clC(X,Y)$ and its subspace of linear mappings
by~$\clL(X,Y)$.
The continuous dual of~$X$ is denoted by~$X'\coloneqq\clL(X,\bbR)$ and the
adjoint of an operator $L\in\clL(X,Y)$ by~$L^*\in\clL(Y',X')$.

Given open intervals~$I_j\subset\bbR$, $j\in\{1,2\}$,  with $\sup I_1=\inf I_2$, and spaces~$X_{I_j}$ of functions~$g_j\colon I_j\to X$, we denote concatenated functions by
\begin{equation}\notag
g_1\ddag g_2(t)\coloneqq\begin{cases}
g_1(t),\quad&\mbox{if }t\in I_1;\\
g_2(t),\quad&\mbox{if } t\in I_2;
\end{cases}
\end{equation}
and set~$X_{I_1}\ddag X_{I_2}\coloneqq\{g_1\ddag g_2\mid (g_1,g_2)\in X_{I_1}\times X_{I_2}\}$.

Finally, we introduce the subset
\begin{equation}\notag
\fkI_0\coloneqq\{\fkD\in\clC(\overline\bbR_0,\overline\bbR_0)\mid \fkD\mbox{ is nondecreasing, }\fkD(0)=0\}.
\end{equation}
 
 With~$N\in\bbN_+$, we write~$\norm{\Bigcdot}{\bbR^N}=\norm{\Bigcdot}{\ell^2}$ for the usual Euclidean norm in~$\bbR^N$.
 
 \section{Optimal control problems}\label{S:optimalcontrol}
 We show that a solution of an ITH optimal control problem can be found as the limit of a sequence of solutions of ITH optimal control problems. 

In the following~$\clH $ is a real and separable Hilbert space, and~$I$ stands for an arbitrary open interval
\begin{equation}\notag
I=(\iota_0,\iota_1)\subseteq(0,+\infty).
\end{equation}

\subsection{General setting} 

We consider, for time~$t\in I$, the control system
\begin{subequations}\label{sys-y}
\begin{align} 
 &\dot y(t) =f(t,y(t) ,u(t) ),\quad y(\iota_0)=z\in \clH ,
\intertext{with controls~$u$ subject to pointwise  constraints}
&u(t)\in \bfC\subset\bbR^M,
 \end{align}
 where~$\bfC$ is a closed convex subset of~$\bbR^M$.
\end{subequations}
We consider  cost functionals as
\begin{subequations}\label{Jz}
\begin{align}\label{Jz0}
\hspace{-2em}\clJ_{I}(y,u)&\coloneqq\tfrac12\norm{\clQ(y)}{L^2(I,{\bbR}
)}^2+\tfrac12\norm{\clN(u)}{L^2(I,\bbR)}^2,
\\
\label{JzP}
\hspace{-2em}\clJ_{I}^\clP\!(y,u)\!&\coloneqq\!\!\begin{cases}
\!\clJ_{I}(y,u),&\!\!\iota_1\!=\!+\infty;\\
\!\clJ_{I}(y,u)\!+\!\tfrac12\norm{\clP(y(\iota_1))}{\bbR}^2,&\!\!\iota_1\!<\!+\infty;
\end{cases}\!\!\!\!\!\!
\end{align}
\end{subequations}
where~$\clQ\in\clC(\clH ,\bbR)$, ~$\clN\in\clC(\bbR^M,\bbR)$, and~$\clP\in\clC(\clH ,\bbR)$.

We look for the minimizers~$(y,u)$ of~$\clJ_{I}^\clP$ in a Cartesian separable Banach   space
\begin{align}
&\clX_I\coloneqq \clX_I^y\times \clX_I^u.\notag
\intertext{We introduce another separable Banach space}
&\clY_I\coloneqq \clY_I^{f}\times \clH,\notag
\intertext{and assume that the following mapping is well defined}
\clG_I^{z}\colon\clX_I&\to\clY_I,\notag\\
(y,u)&\mapsto\Bigl(\dot y -f(\Bigcdot,y ,u),\,y(\iota_0)-z\Bigr).\notag
\end{align}

Finally, we introduce the convex set
\begin{align}
&\bfC_I\coloneqq\{(y,u)\in\clX_I\mid u(t)\in\bfC \mbox{ for } t\in I\}\notag
\end{align}
and the subset of~$\clX_I$ as 
\begin{align}
\hspace{-1em}\fkX_{I}^{z}\coloneqq\{(y,u)\in\clX_{I}\cap\bfC_I\mid \clG_I^{z}(y,u)=(0,0)\}.\notag
\end{align}

Our minimization problem is as follows. 
\begin{problem}\label{Pb:OCPI}
Find
$(y_{I}^{z} ,u_{I}^{z})\in\argmin\limits_{(y,u)\in\fkX_{I}^{z}}\clJ_I^P(y,u)$, for given~$z\in \clH $ and time interval~$I$.
\end{problem}

\begin{example}
For finite-dimensional systems as $\dot y=Ay+Bu$, with~$A\in\bbR^{N\times N}$ and~$B\in\bbR^{N\times M}$, $M\le N$, we can take~$\clH =\bbR^N$,  $\clX_I^y=W^{1,2}(I,\bbR^N)\subset\clC(\overline I,\bbR^N)$, and~$\clX_I^u=L^2(I,\bbR^M)$.  
\end{example}
\begin{example}
For infinite-dimensional systems as diffusion-reaction equations $\dot y=\Delta y+ay+Bu$ defined in a spatial bounded domain~$\Omega\subset\bbR^d$ under homogeneous Dirichlet boundary conditions,   $Bu\coloneqq\sum_{j=1}^Mu_j\Phi_j$, with actuators~$\Phi_j\in L^2(\Omega)$, and where weak solutions are well defined,  we can take~$\clH =L^2(\Omega)$,  $\clX_I^y=\{y\in L^2(I,W^{1,2}_0(\Omega))\mid \dot y\in L^2(I,W^{-1,2}(\Omega))\}\subset\clC(\overline I,L^2(\Omega))$, and~$\clX_I^u=L^2(I,\bbR^M)$.  
\end{example}

Hereafter, in sentences as ``$(y_{I}^{z} ,u_{I}^{z})$ solves Problem~\ref{Pb:OCPI}'' it is understood that we are referring to the initial state~$z$ and interval~$I$, included in the notation.

We denote the ITH  value function~$\fkV_s\colon\clH \to\bbR$ as
\begin{equation}\notag
 \fkV_s(z)\coloneqq\clJ_{\bbR_{s}}(y_{\bbR_{s}}^{z} ,u_{\bbR_{s}}^{z} ),
\end{equation}
with~$(y_{\bbR_{s}}^{z} ,u_{\bbR_{s}}^{z} )$ solving Problem~\ref{Pb:OCPI}.

We make the following assumptions.

\begin{assumption}\label{A:cont}
For every~$s\ge0$, the dynamical system~\eqref{sys-y} is well posed locally in time for data~$(z,u)\in\clH \times \clX_I^u$ with~$u(t)\in\bfC$. There exist
$t_0=t_0(z,u)>0$ and~$\fkD\in\fkI_0$, so that for all~$\tau\in(0,t_0)$
\begin{equation}\notag
\norm{y}{\clX_{(s,s+\tau)}^y}\le \fkD(\tau)(\norm{(z,u)}{\clH \times \clX_{(s,s+\tau)}^u}).
\end{equation}
 \end{assumption}
 
 \begin{assumption}\label{A:conc}
We have $\clX_{I_1}^y\subseteq \clC(\overline I_1,\clH )$, for every interval~$I_1\subseteq\bbR_0$.
Further, for every interval~$I_2\subset\bbR_0$ with $\sup I_1=\inf I_2$, we have $\clX_{(\inf I_1,\sup I_2)}^u=\clX_{I_1}^u\ddag \clX_{I_2}^u$
and~$\clX_{(\inf I_1,\sup I_2)}^y=(\clX_{I_1}^y\ddag\clX_{I_2}^y)\cap\clC(\overline I_1\cup \overline I_2,\clH )$.
 \end{assumption}

 \begin{assumption}\label{A:Qweakc}
The maps $\widehat \clQ\colon \clX_{\bbR_s}^y\to L^2(\bbR_s,\bbR)$ and $\widehat \clN\colon \clX_{\bbR_s}^u\to L^2(\bbR_s,\bbR)$, defined as $(\widehat \clQ(y))(t)\coloneqq \clQ(y(t))$ and $(\widehat \clN(u))(t)\coloneqq \clN(u(t))$, are weakly continuous.
 \end{assumption}
 \begin{assumption}\label{A:weakcont}
For every $c>0$ and interval~$I\subseteq\bbR_{0}$, the set $\{(y,u)\in \fkX_{I}^{z}\mid \clJ_{I}(y ,u )\le c\}$ is weakly compact.
 \end{assumption}
\begin{assumption}\label{A:exist-optim}
For every~$s\ge0$, with~$I=\bbR_{s}$ there exists at least one solution for Problem~\ref{Pb:OCPI} and 
~$\fkV_s(z)\le \tfrac12\norm{\clR(z)}{\bbR}^2$,
for a function~$\clR\in\clC(\clH ,\bbR)$ independent of~$s$, with~$\clR(0)=0$.
\end{assumption}
  \begin{assumption}\label{A:P}
For all~$z\in\clH $, we have $\norm{\clP(z)}{\bbR^2}^2\le\fkD_1(\fkV_s(z))$, for a function~$\fkD_1\in\fkI_0$ independent of~$s$.
 \end{assumption}
 \begin{assumption}\label{A:R}
For each~$s\ge0$ and $z\in\clH $, it holds the following. For every~$r>s$ there exists a sequence~$({T}_n,{T}^\circ_n)_{n\in\bbN_+}$, such that~$T_n\to+\infty$,
\begin{equation}\notag
r\le{T}^\circ_n\le{T}_n<+\infty, \mbox{ and }\lim\limits_{n\to+\infty}\!\fkV_{T^\circ_n}(y_{(s,T_n)}^{z}(T^\circ_n))=0,
\end{equation}
where~$(y_{(s,T_n)}^{z} ,u_{(s,T_n)}^{z})$ solves Problem~\ref{Pb:OCPI}.
 \end{assumption}

 Next, we recall the dynamic programming principle.
 \begin{lemma}\label{L:DPP}
For every~$T>s$, with~$I^T\coloneqq(s,T)$,
\begin{align}\notag
\hspace{-2em}&(\overline y ,\overline u)\in\argmin\limits_{(y,u)\in\fkX_{\bbR_s}^{z}}\clJ_{\bbR_s}(y,u)\\
\hspace{-2em}\Longleftrightarrow\quad&(\overline y ,\overline u)\rest{I^T}\in\argmin\limits_{(w,v)\in\fkX_{I^T}^{z}}\left(\clJ_{I^T}(w,v)+\fkV_T(w(T))\right).\notag
\end{align}
\end{lemma}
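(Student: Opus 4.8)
The plan is to reduce the statement to two structural facts about the cost and the admissible sets, and then run the standard dynamic programming comparison. Throughout, fix $T>s$ and write $I^T=(s,T)$; since $\iota_1=+\infty$ on both $\bbR_s$ and $\bbR_T$, no terminal penalty is present and $\clJ^\clP$ agrees with $\clJ$ on these half-lines. The first fact is \emph{additivity}: because $\clJ_I$ is assembled from the squared $L^2(I,\bbR)$-norms of the pointwise compositions $\clQ(y)$ and $\clN(u)$, and since $L^2(\bbR_s,\bbR)=L^2(I^T,\bbR)\oplus L^2(\bbR_T,\bbR)$ (the point $\{T\}$ being negligible), one has
\begin{equation}\notag
\clJ_{\bbR_s}(y,u)=\clJ_{I^T}(y,u)+\clJ_{\bbR_T}(y,u)
\end{equation}
for every admissible pair defined on $\bbR_s$. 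The second fact is \emph{restriction/concatenation}: if $(y,u)\in\fkX_{\bbR_s}^{z}$ then $(y,u)\rest{I^T}\in\fkX_{I^T}^{z}$ and $(y,u)\rest{\bbR_T}\in\fkX_{\bbR_T}^{y(T)}$ (the dynamics and the constraint $\bfC$ survive restriction, and the tail inherits $y(T)$ as its initial state); conversely, given $(w,v)\in\fkX_{I^T}^{z}$ and $(\xi,\eta)\in\fkX_{\bbR_T}^{w(T)}$, the concatenation $(w\ddag\xi,\,v\ddag\eta)$ lies in $\fkX_{\bbR_s}^{z}$. This is exactly where Assumption~\ref{A:conc} is used: the concatenated control lies in $\clX_{\bbR_s}^u=\clX_{I^T}^u\ddag\clX_{\bbR_T}^u$, and the concatenated state lies in $\clX_{\bbR_s}^y$ precisely because it is continuous at $t=T$, which holds since $w(T)=\xi(T)$; the differential equation then holds a.e.\ on each piece. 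Existence of an optimal tail realizing $\fkV_T(w(T))$ for each terminal state comes from Assumption~\ref{A:exist-optim}.

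For the forward implication, let $(\overline y,\overline u)$ minimize $\clJ_{\bbR_s}$. Given any $(w,v)\in\fkX_{I^T}^{z}$, choose an optimal tail $(\xi,\eta)\in\fkX_{\bbR_T}^{w(T)}$ with $\clJ_{\bbR_T}(\xi,\eta)=\fkV_T(w(T))$; concatenating and invoking additivity with the optimality of $(\overline y,\overline u)$ yields
\begin{equation}\notag
\clJ_{I^T}(\overline y,\overline u)+\clJ_{\bbR_T}(\overline y,\overline u)=\clJ_{\bbR_s}(\overline y,\overline u)\le\clJ_{I^T}(w,v)+\fkV_T(w(T)).
\end{equation}
Since $(\overline y,\overline u)\rest{\bbR_T}$ is admissible from $\overline y(T)$ we have $\clJ_{\bbR_T}(\overline y,\overline u)\ge\fkV_T(\overline y(T))$, so the left-hand side dominates $\clJ_{I^T}(\overline y,\overline u)+\fkV_T(\overline y(T))$, which proves that $(\overline y,\overline u)\rest{I^T}$ minimizes the truncated functional. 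Choosing $(w,v)=(\overline y,\overline u)\rest{I^T}$ in the same chain forces $\clJ_{\bbR_T}(\overline y,\overline u)=\fkV_T(\overline y(T))$, i.e.\ the tail is itself ITH-optimal from $\overline y(T)$; I will reuse this observation.

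For the converse, assume $(\overline y,\overline u)\rest{I^T}$ minimizes the truncated functional. For any competitor $(y,u)\in\fkX_{\bbR_s}^{z}$, additivity together with $\clJ_{\bbR_T}(y,u)\ge\fkV_T(y(T))$ and truncated optimality gives
\begin{equation}\notag
\clJ_{\bbR_s}(y,u)\ge\clJ_{I^T}(y,u)+\fkV_T(y(T))\ge\clJ_{I^T}(\overline y,\overline u)+\fkV_T(\overline y(T)).
\end{equation}
So every admissible cost is bounded below by $\clJ_{I^T}(\overline y,\overline u)+\fkV_T(\overline y(T))$, and this common lower bound coincides with the truncated minimum, hence with $\fkV_s(z)$. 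It remains only to certify that $(\overline y,\overline u)$ attains it, i.e.\ that $\clJ_{\bbR_s}(\overline y,\overline u)=\clJ_{I^T}(\overline y,\overline u)+\fkV_T(\overline y(T))$, which by additivity is the statement $\clJ_{\bbR_T}(\overline y,\overline u)=\fkV_T(\overline y(T))$ that its tail be ITH-optimal.

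The main obstacle I anticipate is precisely this last point. The terminal datum $\fkV_T(w(T))$ in the truncated cost depends only on the endpoint $w(T)$ and is blind to the behavior on $\bbR_T$, so optimality of the restriction alone constrains nothing beyond $T$; additivity only delivers the inequality $\clJ_{\bbR_s}(\overline y,\overline u)\ge\clJ_{I^T}(\overline y,\overline u)+\fkV_T(\overline y(T))$, which is the wrong direction for the conclusion. The equivalence must therefore be read with the tail of $(\overline y,\overline u)$ being the optimal ITH continuation from $\overline y(T)$ (whose existence is Assumption~\ref{A:exist-optim}), so that replacing any admissible tail by this optimal one produces, via the concatenation of Assumption~\ref{A:conc}, a full trajectory of cost exactly $\fkV_s(z)$. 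Pinning down this optimal completion of the tail, and checking that the concatenation lands back in $\clX_{\bbR_s}$ through the continuity at $T$, is the one delicate step; everything else is the additive bookkeeping displayed above.
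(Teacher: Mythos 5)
Your argument is correct and follows essentially the same route as the paper: additivity of the integral cost over the splitting $\bbR_s=I^T\cup\bbR_T$, concatenation of a truncated competitor with an ITH-optimal tail via Assumptions~\ref{A:conc} and~\ref{A:exist-optim}, and a chain of inequalities that collapses to equality. The subtlety you flag in the converse is genuine and is not dispelled by the paper's proof either: the displayed chain compares one minimizer of each problem and directly yields only the forward implication together with the identity $\fkV_s(z)=\min_{(w,v)\in\fkX_{I^T}^{z}}\bigl(\clJ_{I^T}(w,v)+\fkV_T(w(T))\bigr)$, while the literal backward implication for an arbitrary pair requires, exactly as you observe, that the tail on $\bbR_T$ be an optimal continuation from $\overline y(T)$. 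Your reading of the equivalence is the right one, and it is the forward implication plus the equality of optimal values that is actually invoked later, in Corollary~\ref{C:DPP} and in the proof of Theorem~\ref{T:limitOCP}.
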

\begin{proof}
Let us be given~$(\overline y ,\overline u)\in\fkX_{\bbR_s}^{z}$ minimizing~$\clJ_{\bbR_s}$ and~$(\overline w ,\overline v)\in\fkX_{I^T}^{z}$ minimizing~$\clJ_{I^T}(w,v)+\fkV_T(w(T))$, then by Assumption~\ref{A:conc} and optimality we find
\begin{align}
&\clJ_{\bbR_s}(\overline y ,\overline u)\le\clJ_{I^T}(\overline w ,\overline v)+\clJ_{\bbR_T}( y_{\bbR_T}^{\overline w(T)} ,u_{\bbR_T}^{\overline w(T)})\notag\\
&\le\clJ_{I^T}(\overline y ,\overline u)+\clJ_{\bbR_T}( y_{\bbR_T}^{\overline y(T)} ,u_{\bbR_T}^{\overline y(T)})\notag\\
&\le\clJ_{I^T}(\overline y ,\overline u)+\clJ_{\bbR_T}(\overline y ,\overline u)=\clJ_{\bbR_s}(\overline y ,\overline u),\label{dpp=}
\end{align}
which finishes the proof.
\end{proof}

\begin{corollary}\label{C:DPP}
For~$(y_{\bbR_s}^{z} ,u_{\bbR_s}^{z})$ solving Problem~\ref{Pb:OCPI}, 
\begin{equation}\label{dpp-cor}
\lim_{T\to+\infty}\fkV_T(y_{\bbR_s}^{z}(T))=0\quad\mbox{if}\quad \fkV_s(z)<+\infty.
\end{equation}
 \end{corollary}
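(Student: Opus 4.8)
The plan is to extract from the proof of Lemma~\ref{L:DPP} the stronger \emph{additive} form of the dynamic programming principle, and then to combine it with the monotone (tail) convergence of the running cost.

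First I would observe that the displayed chain of inequalities~\eqref{dpp=} begins and ends with the same quantity $\clJ_{\bbR_s}(\overline y,\overline u)$, so every inequality in it is actually an equality. Specializing to the optimal pair $(\overline y,\overline u)=(y_{\bbR_s}^{z},u_{\bbR_s}^{z})$, writing $I^T\coloneqq(s,T)$, and recalling that $\clJ_{\bbR_T}(y_{\bbR_T}^{\overline y(T)},u_{\bbR_T}^{\overline y(T)})=\fkV_T(\overline y(T))$ holds by the very definition of the value function (the right-hand problem being solvable by Assumption~\ref{A:exist-optim}), the collapse of~\eqref{dpp=} gives, for every $T>s$,
\begin{equation}\notag
\fkV_s(z)=\clJ_{I^T}(y_{\bbR_s}^{z},u_{\bbR_s}^{z})+\fkV_T(y_{\bbR_s}^{z}(T)),
\end{equation}
that is, $\fkV_T(y_{\bbR_s}^{z}(T))=\fkV_s(z)-\clJ_{I^T}(y_{\bbR_s}^{z},u_{\bbR_s}^{z})$. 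Note that this also shows $\fkV_T(y_{\bbR_s}^{z}(T))$ is finite.

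Next I would analyze the behavior of the truncated cost as $T\to+\infty$. Because the integrands $\tfrac12\norm{\clQ(y_{\bbR_s}^{z}(t))}{\bbR}^2$ and $\tfrac12\norm{\clN(u_{\bbR_s}^{z}(t))}{\bbR}^2$ defining $\clJ_{I^T}$ are nonnegative, the map $T\mapsto\clJ_{I^T}(y_{\bbR_s}^{z},u_{\bbR_s}^{z})$ is nondecreasing, and by monotone convergence it increases to $\clJ_{\bbR_s}(y_{\bbR_s}^{z},u_{\bbR_s}^{z})=\fkV_s(z)$. Under the standing hypothesis $\fkV_s(z)<+\infty$ this limit is finite, hence $\clJ_{I^T}(y_{\bbR_s}^{z},u_{\bbR_s}^{z})\to\fkV_s(z)$ and the additive identity forces $\fkV_T(y_{\bbR_s}^{z}(T))\to0$. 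Since each $\fkV_T\ge0$ (the cost being a sum of squared norms), the limit is genuinely $0$, which is the claim.

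The only delicate point is the passage to the additive identity, that is, recognizing~\eqref{dpp=} as a chain of equalities rather than mere inequalities; this is immediate once its two extreme terms are seen to coincide, but it rests on the concatenation structure of Assumption~\ref{A:conc} already invoked in Lemma~\ref{L:DPP}. The remainder is elementary, being nothing more than the vanishing of the tail of a convergent nonnegative integral.
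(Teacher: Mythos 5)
Your proposal is correct and follows essentially the same route as the paper: the paper likewise extracts the additive identity $\fkV_T(y_{\bbR_s}^{z}(T))=\fkV_s(z)-\clJ_{I^T}(y_{\bbR_s}^{z},u_{\bbR_s}^{z})$ from the collapsed chain~\eqref{dpp=} in the proof of Lemma~\ref{L:DPP} and then concludes from the integral form of the cost that the tail vanishes. Your write-up merely makes explicit two points the paper leaves implicit (why the chain is a chain of equalities, and the monotone-convergence step), so there is nothing to change.
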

 \begin{proof}
 By Lemma~\ref{L:DPP} and~\eqref{dpp=} in its proof,  we have $\fkV_T(y_{\bbR_s}^{z}(T))=\clJ_{\bbR_T}( y_{\bbR_s}^{z} ,u_{\bbR_s}^{z})=\fkV_s(z)-\clJ_{I^T}(y_{\bbR_s}^{z} ,u_{\bbR_s}^{z})$. Due to the integral form of the cost functional~\eqref{Jz0}, we obtain~$\fkV_T(y_{\bbR_s}^{z}(T))\to0$ as~$T\to+\infty$.
 \end{proof}
The main result of this manuscript is as follows.

\begin{theorem}\label{T:limitOCP}
Let Assumptions~\ref{A:cont}--\ref{A:R} hold true.
Let~$z\in\clH $ and ${T}>s\ge0$. Then, there exists a strictly increasing divergent sequence~$({T}_n)_{n\in\bbN}$, with~$s<{T}_n$, as in Assumption~\ref{A:R}, such that
\begin{equation}\notag
\lim_{n\to+\infty}\clJ_{(s,{T}_n)}^P(y_{(s,{T}_n)}^{z} ,u_{(s,{T}_n)}^{z})=\clJ_{\bbR_{s}}^P(y_{\bbR_{s}}^{z} ,u_{\bbR_{s}}^{z}),
\end{equation}
where~$(y_{(s,{T})}^{z},u_{(s,{T})}^{z})$ and~$(y_{\bbR_{s}}^{z},u_{\bbR_{s}}^{z})$ solve Problem~\ref{Pb:OCPI}.
Furthermore, given an arbitrary~$r>s$ 
there exists a subsequence~${T}_{\sigma(n)}$, with~${T}_{\sigma(n)}\ge r$, so that the restrictions of the optimal pairs to $(s,r)$ converge weakly,
\begin{equation}\label{weakconv-sr}
(y_{(s,{T}_{\sigma(n)})}^{z},u_{(s,{T}_{\sigma(n)})}^{z})\rest{(s,r)}
\xrightharpoonup[\clX_{(s,r)}]{}
(\widehat y,\widehat u)\rest{(s,r)},
\end{equation}
where~$(\widehat y,\widehat u)\in\fkX_{\bbR_s}^z$ solves Problem~\ref{Pb:OCPI}.
\end{theorem}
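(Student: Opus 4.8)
The plan is to squeeze the penalized FTH optimal cost between two bounds, both converging to $\fkV_s(z)=\clJ^P_{\bbR_s}(y^z_{\bbR_s},u^z_{\bbR_s})$ (the last equality holding because $\iota_1=+\infty$ removes the penalty in \eqref{JzP}), and then to produce the ITH optimizer as a weak limit through a diagonal exhaustion of $\bbR_s$. Note first that $\fkV_s(z)<+\infty$ by Assumption~\ref{A:exist-optim}, so Corollary~\ref{C:DPP} applies. Fix the sequence $(T_n,T^\circ_n)$ furnished by Assumption~\ref{A:R}. For the upper bound, the restriction of $(y^z_{\bbR_s},u^z_{\bbR_s})$ to $(s,T_n)$ is admissible for the FTH problem (feasibility is preserved under restriction by Assumption~\ref{A:conc}), so optimality gives
\[\clJ^P_{(s,T_n)}(y^z_{(s,T_n)},u^z_{(s,T_n)})\le\clJ_{(s,T_n)}(y^z_{\bbR_s},u^z_{\bbR_s})+\tfrac12\norm{\clP(y^z_{\bbR_s}(T_n))}{\bbR}^2.\]
The integral term is at most $\fkV_s(z)$ since the integrand is nonnegative and $(s,T_n)\subset\bbR_s$, while by Assumption~\ref{A:P} the penalty is bounded by $\tfrac12\fkD_1(\fkV_{T_n}(y^z_{\bbR_s}(T_n)))$, which tends to $0$ because $\fkV_T(y^z_{\bbR_s}(T))\to0$ by Corollary~\ref{C:DPP} and $\fkD_1\in\fkI_0$ is continuous with $\fkD_1(0)=0$. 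Hence $\limsup_n\clJ^P_{(s,T_n)}\le\fkV_s(z)$.

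For the matching lower bound I would invoke Assumption~\ref{A:conc} to concatenate the FTH optimizer on $(s,T^\circ_n)$ with the ITH optimizer started from the state $y^z_{(s,T_n)}(T^\circ_n)$; the result is admissible for the ITH problem on $\bbR_s$, so optimality of $\fkV_s$ together with the nonnegativity bounds $\clJ_{(s,T^\circ_n)}\le\clJ_{(s,T_n)}\le\clJ^P_{(s,T_n)}$ (using $T^\circ_n\le T_n$ and the nonnegative penalty) yield
\begin{align*}
\fkV_s(z)&\le\clJ_{(s,T^\circ_n)}(y^z_{(s,T_n)},u^z_{(s,T_n)})+\fkV_{T^\circ_n}(y^z_{(s,T_n)}(T^\circ_n))\\
&\le\clJ^P_{(s,T_n)}(y^z_{(s,T_n)},u^z_{(s,T_n)})+\fkV_{T^\circ_n}(y^z_{(s,T_n)}(T^\circ_n)).
\end{align*}
The last term vanishes by Assumption~\ref{A:R}, so $\liminf_n\clJ^P_{(s,T_n)}\ge\fkV_s(z)$, and combining with the upper bound establishes the first assertion.

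For the weak convergence I would exhaust $\bbR_s$ by intervals $(s,r_k)$ with $r_k\uparrow+\infty$. For each $k$, once $T_n\ge r_k$ the restriction $(y^z_{(s,T_n)},u^z_{(s,T_n)})\rest{(s,r_k)}$ lies in $\fkX^z_{(s,r_k)}$ and, by the cost convergence just proved, satisfies $\clJ_{(s,r_k)}\le\clJ^P_{(s,T_n)}$ uniformly; hence it sits in a sublevel set that is weakly compact by Assumption~\ref{A:weakcont}. A diagonal extraction then produces a single subsequence whose restrictions converge weakly on every $(s,r_k)$, and since restriction between these spaces is a bounded, hence weak-weak continuous, operator this defines a consistent global pair $(\widehat y,\widehat u)$ satisfying \eqref{weakconv-sr} on any prescribed $(s,r)$. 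Weak closedness of the sublevel sets gives $(\widehat y,\widehat u)\rest{(s,r_k)}\in\fkX^z_{(s,r_k)}$, while weak lower semicontinuity of $\clJ_{(s,r_k)}$, a consequence of Assumption~\ref{A:Qweakc}, yields $\clJ_{(s,r_k)}(\widehat y,\widehat u)\le\liminf_n\clJ_{(s,r_k)}(y^z_{(s,T_n)},u^z_{(s,T_n)})\le\fkV_s(z)$; letting $k\to+\infty$ via monotone convergence gives $\clJ_{\bbR_s}(\widehat y,\widehat u)\le\fkV_s(z)$.

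The step I expect to be the main obstacle is promoting this local information to genuine global admissibility $(\widehat y,\widehat u)\in\fkX^z_{\bbR_s}$: one must verify that the glued limit belongs to the global space $\clX_{\bbR_s}$ and solves the dynamics $\clG^z_{\bbR_s}(\widehat y,\widehat u)=(0,0)$ on all of $\bbR_s$, not merely on each $(s,r_k)$. This is precisely where the concatenation structure of Assumption~\ref{A:conc}, the pointwise constraint $\widehat u(t)\in\bfC$ inherited from the convex (hence weakly closed) sets $\bfC_{(s,r_k)}$, and the finiteness $\clJ_{\bbR_s}(\widehat y,\widehat u)\le\fkV_s(z)<+\infty$ must be combined to control the global norm. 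Once $(\widehat y,\widehat u)\in\fkX^z_{\bbR_s}$ is secured, the definition of $\fkV_s$ forces $\clJ_{\bbR_s}(\widehat y,\widehat u)\ge\fkV_s(z)$, whence equality and the optimality of $(\widehat y,\widehat u)$ for Problem~\ref{Pb:OCPI} on $\bbR_s$, completing the argument.
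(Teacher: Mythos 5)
Your treatment of the cost convergence is correct and coincides with the paper's: the upper bound comes from restricting the ITH optimizer and controlling the penalty via Assumption~\ref{A:P} together with Corollary~\ref{C:DPP}, and the lower bound from concatenating the FTH optimizer on $(s,T^\circ_n)$ with an ITH optimizer started at $y_{(s,T_n)}^{z}(T^\circ_n)$, using Assumptions~\ref{A:conc}, \ref{A:exist-optim} and~\ref{A:R}.

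The weak-convergence part, however, contains a genuine gap, and you flag it yourself without closing it: after the diagonal extraction over the exhaustion $(s,r_k)$ you only obtain a limit pair on each finite window, and you never establish that the glued object belongs to $\clX_{\bbR_s}$, let alone to $\fkX_{\bbR_s}^{z}$. The uniform local bounds $\clJ_{(s,r_k)}\le\fkV_s(z)$ control only $\norm{\widehat\clQ(y)}{L^2}$ and $\norm{\widehat\clN(u)}{L^2}$; since $\clQ$ is allowed to have a nontrivial kernel (a case the paper explicitly wants to cover), these do not bound the $\clX_{(s,r_k)}$-norms uniformly in $k$, so there is no a priori global norm bound and the final appeal to ``the definition of $\fkV_s$'' is unjustified. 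The repair is already in your own lower-bound construction: the concatenated pairs $(\bfy_{T^\circ_n},\bfu_{T^\circ_n})$ are \emph{global} elements of $\fkX_{\bbR_s}^{z}$ whose full cost $\clJ_{\bbR_s}(\bfy_{T^\circ_n},\bfu_{T^\circ_n})$ is bounded uniformly in $n$ (it is at most $\clJ^\clP_{(s,T_n)}(y_{(s,T_n)}^{z},u_{(s,T_n)}^{z})+\fkV_{T^\circ_n}(y_{(s,T_n)}^{z}(T^\circ_n))$, and both summands stay bounded by the estimates you already derived). Applying Assumption~\ref{A:weakcont} with $I=\bbR_s$ to \emph{this} sequence yields a subsequence converging weakly in $\clX_{\bbR_s}$ to some $(\widehat y,\widehat u)$ that lies in $\fkX_{\bbR_s}^{z}$ automatically, because the sublevel set is weakly compact, hence weakly closed, in the global space; Assumption~\ref{A:Qweakc} then gives $\clJ_{\bbR_s}(\widehat y,\widehat u)\le\fkV_s(z)$ and therefore optimality. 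Since $T^\circ_n>r$, the concatenated pairs coincide with the FTH optimizers on $(s,r)$, so the global weak convergence restricts to \eqref{weakconv-sr}. This is exactly the paper's route, and it eliminates the gluing problem rather than deferring it.
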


\begin{proof}
We fix~$s\ge0$. For an arbitrarily given ${T}> s$, denote the time interval~$I^{T}\coloneqq(s,{T})$ and let~$(y_{I^{T}}^{z},u_{I^{T}}^{z})$ solve Problem~\ref{Pb:OCPI}.  
Then, by optimality,
\begin{align}
&\clJ_{I^{T}}^\clP(y_{I^{T}}^{z},u_{I^{T}}^{z})
\le \clJ_{I^{T}}^\clP(y_{\bbR_{s}}^{z}\rest{I^{T}},u_{\bbR_{s}}^{z}\rest{I^{T}})\notag\\
&\hspace{0em}= \clJ_{I^{T}}(y_{\bbR_{s}}^{z}\rest{I^{T}},u_{\bbR_{s}}^{z}\rest{I^{T}})+\tfrac12\norm{\clP(y_{\bbR_{s}}^{z}({T}))}{\bbR}^2
\notag\\
&\hspace{0em}\le \fkV_s(z)+\tfrac12\norm{\clP(y_{\bbR_{s}}^{z}({T}))}{\bbR}^2\label{opt1}
\end{align}
and, by Assumption~\ref{A:P}, 
\begin{align}
&\hspace{0em}\clJ_{I^{T}}^\clP(y_{I^{T}}^{z},u_{I^{T}}^{z})
\le \fkV_s(z)+\tfrac12\fkD_1(\fkV_{T_n}(y_{\bbR_{s}}^{z}(T))),\notag
\end{align}
which implies that
\begin{align}
\limsup_{T\to+\infty}\clJ_{I^{{T}}}^\clP(y_{I^{{T}}}^{z},u_{I^{{T}}}^{z})&
\le \fkV_s(z),\label{fin<inf}
\end{align}
because, due to~\eqref{dpp-cor} and the finite cost condition in Assumption~\ref{A:exist-optim},
\begin{align}
&\lim_{T\to+\infty}\tfrac12\fkD_1(\fkV_{T}(y_{\bbR_{s}}^{z}(T)))=0.\label{limPR=0P}
\end{align}

Let us fix an arbitrary~$r>s$ and a sequence $({T}_n,{T}^\circ_n)_{n\in\bbN_+}$ as in Assumption~\ref{A:R}: ${T}_n\to+\infty$ and
\begin{align}
&\!\lim_{n\to+\infty}\!\fkV_{T_n^\circ}(y_{I^{T_n}}^{z}(T_n^\circ))=0,\quad r<{T}^\circ_n\le{T}_n.\label{limPR=0R}
\end{align}
Take the restriction~$(y_{I^{{T}_n}}^{z},u_{I^{{T}_n}}^{z})\rest{(s,{T}^\circ_n)}$ of~$(y_{I^{{T}_n}}^{z},u_{I^{{T}_n}}^{z})$ to~$I^{T_n^\circ}=(s,{T}^\circ_n)$, the state~$y_{I^{T}}^{z}({T}^\circ_n)$, and set
\begin{align}
&\hspace{-2em}(\bfy_{{T}^\circ_n},\bfu_{{T}^\circ_n})\notag\\
&\hspace{-2em}=(y_{I^{{T}_n}}^{z},u_{I^{{T}_n}}^{z})\rest{(s,{T}^\circ_n)}\ddag\; (y_{\bbR_{{T}^\circ_n}}^{y_{I^{{T}_n}}^{z}({T}^\circ_n)},u_{\bbR_{{T}^\circ_n}}^{y_{I^{{T}_n}}^{z}({T}^\circ_n)}),\label{Th:concat}
\end{align}
where~$(y_{\bbR_{{T}^\circ_n}}^{y_{I^{{T}_n}}^{z}({T}^\circ_n)},u_{\bbR_{{T}^\circ_n}}^{y_{I^{{T}_n}}^{z}({T}^\circ_n)})$ solves Problem~\ref{Pb:OCPI}.
By Assumption~\ref{A:conc}, $(\bfy_{{T}^\circ_n},\bfu_{{T}^\circ_n})\in\fkX_{I^{{T}^\circ_n}}^{z}\ddag\fkX_{\bbR_{{T}^\circ_n}}^{y_{I^{{T}_n}}^{z}({T}^\circ_n)}=\fkX_{\bbR_s}^{z}$. Thus,  by optimality and Assumption~\ref{A:exist-optim},
\begin{align}
&\fkV_s(z)=\clJ_{{\bbR_{s}}}(y_{\bbR_{s}}^{z},u_{\bbR_{s}}^{z})\le \clJ_{\bbR_{s}}(\bfy_{{T}^\circ_n},\bfu_{{T}^\circ_n})\notag\\
&=\clJ_{I^{T^\circ_n}}(\bfy_{{T}^\circ_n},\bfu_{{T}^\circ_n})+\fkV_{T_n^\circ}(y_{I^{T_n}}^{z}(T_n^\circ))
.\notag\\
&\le\clJ_{I^{{T}_n}}(y_{I^{{T}_n}}^{z},u_{I^{{T}_n}}^{z})+\fkV_{T_n^\circ}(y_{I^{T_n}}^{z}(T_n^\circ))\notag\\
&\le\clJ_{I^{{T}_n}}^\clP(y_{I^{{T}_n}}^{z},u_{I^{{T}_n}}^{z})+\fkV_{T_n^\circ}(y_{I^{T_n}}^{z}(T_n^\circ)),\label{opt2}
\end{align}
which, together with~\eqref{limPR=0R}, lead us to
\begin{align}
\fkV_s(z)&\le 
\liminf_{n\to+\infty}\clJ_{I^{{T}_n}}^\clP(y_{I^{{T}_n}}^{z},u_{I^{{T}_n}}^{z}).\label{inf<fin}
\end{align}

Combining~\eqref{fin<inf} and~\eqref{inf<fin} we obtain
\begin{align}
\lim_{n\to+\infty}\clJ_{I^{{T}_n}}^\clP(y_{I^{{T}_n}}^{z},u_{I^{{T}_n}}^{z})=\fkV_s(z).\label{inf=fin}
\end{align}

Next, from~\eqref{opt2}, \eqref{opt1}, \eqref{limPR=0P}, and~\eqref{limPR=0R}, we have
\begin{align}
&\hspace{-1.5em}\clJ_{\bbR_{s}}(\bfy_{{T}^\circ_{n}},\bfu_{{T}^\circ_{n}})\le\clJ_{I^{{T}_n}}^\clP(y_{I^{{T}_n}}^{z},u_{I^{{T}_n}}^{z})+\fkV_{T_n^\circ}(y_{I^{T_n}}^{z}(T_n^\circ))\notag\\
&\hspace{-1.5em}\le\fkV_s(z)+\tfrac12\norm{\clP(y_{\bbR_{s}}^{z}({T}_{n}))}{\bbR}^2
+\tfrac12\fkV_{T_{n}^\circ}(y_{I^{T_{n}}}^{z}(T_{n}^\circ))\notag\\
&\hspace{-1.5em}\le c,\label{opt4}
\end{align}
with~$c>0$ independent of~$n$. Then, by Assumption~\ref{A:weakcont}, we can take a subsequence~$({T}_{\sigma(n)},{T}^\circ_{\sigma(n)})_{n\in\bbN_+}$ of~$({T}_n,{T}^\circ_n)_{n\in\bbN_+}$ such that
\begin{align}
\!\!\! (\overline\bfy_{n},\overline\bfu_{n})\coloneqq(\bfy_{{T}^\circ_{\sigma(n)}},\bfu_{{T}^\circ_{\sigma(n)}})\xrightharpoonup[\;\clX_{\bbR_{s}}\;]{}
 (\overline\bfy_{\infty},\overline\bfu_{\infty})\label{wconvRs}
\end{align}
holds for a suitable~$(\overline\bfy_{\infty},\overline\bfu_{\infty})\in\fkX_{\bbR_{s}}^z\subset\clX_{\bbR_{s}}$.
By Assumption~\ref{A:Qweakc}, combined with~\eqref{opt4}, \eqref{limPR=0P}, and~\eqref{limPR=0R}, it follows 
$
\clJ_{{\bbR_{s}}}(\overline\bfy_{\infty},\overline\bfu_{\infty})\le\fkV_s({z})
$
and, by  optimality,
\begin{align}
\clJ_{\bbR_{s}}(\overline\bfy_{\infty},\overline\bfu_{\infty})=\fkV_s({z}).\notag
\end{align}
That is,~$(\overline\bfy_{\infty},\overline\bfu_{\infty})$ is a minimizer for~$\clJ_{\bbR_{s}}$ in~$\fkX_{\bbR_{s}}$.
Finally, note that
$(\overline\bfy_{n},\overline\bfu_{n})\rest{(s,r)}=(\bfy_{{T}^\circ_{\sigma(n)}},\bfu_{{T}^\circ_{\sigma(n)}})\rest{(s,r)}=(y_{(s,{T}_{\sigma(n)})}^{z},u_{(s,{T}_{\sigma(n)})}^{z})\rest{(s,r)}$ since,
recalling~\eqref{limPR=0R}, ${T}^\circ_{n}>r$. 

Therefore, by~\eqref{wconvRs} and Assumption~\ref{A:weakcont}, we find
\begin{align}
(y_{(s,{T}_{\sigma(n)})}^{z},u_{(s,{T}_{\sigma(n)})}^{z})\rest{(s,r)}&= (\overline\bfy_{n},\overline\bfu_{n})\rest{(s,r)}\notag\\
&\xrightharpoonup[\;\clX_{(s,r)}\;]{}
 (\overline\bfy_{\infty},\overline\bfu_{\infty})\rest{(s,r)}.\notag
\end{align}
Thus, the theorem holds with~$ (\widehat y,\widehat u) =(\overline\bfy_{\infty},\overline\bfu_{\infty})$.
\end{proof}

\section{Remarks on the assumptions}\label{S:rmksAssumptions}
All assumptions but Assumptions~\ref{A:P} and~\ref{A:R}, concern either the well-posedness of the dynamical system or properties that we will likely need for the proof of existence of optimal controls (e.g., as the limit of a minimizing sequence). Hence we focus the discussion on Assumptions~\ref{A:P} and~\ref{A:R}.

\subsection{The functions~$\clQ$, $\clP$, and~$\clR$}
In the literature,~$\clQ$ is often taken as a norm (e.g., $\clQ(y(t))=\norm{y(t)}{\clH }$), sometimes just for simplicity. However, in applications it may be convenient to take~$\clQ$ with nontrivial zeros,~$\clQ^{-1}(\{0\})\ne\{0\}$. An example is the computation of optimal stabilizing controls for linear systems with a large number~$D$ of degrees of freedom (e.g., given by discrete accurate spatial approximations of autonomous parabolic equations). In this case, if~$\clQ\in\bbR^{n\times D}$ has low rank~$n\ll D$ (say, $n$ is small when compared to~$D$) the computation of the solution~$\Pi\in\bbR^{D\times D}$ of the algebraic Riccati equation associated with the optimal control (for quadratic cost functionals) can become an easier task, in particular, when the number~$M\ll D$ of actuators is also small, we can often look for approximations~$\Pi_\rmf^\top\Pi_\rmf\approx\Pi$, where~$\Pi_\rmf\in\bbR^{r\times D}$ has low rank~$r\ll D$ (e.g., see the discussion in~\cite[sects.~4 and~5]{BennerLiPenzl08}).

\medskip
The choice of~$\clP$, for the FTH problems, will usually be made depending on a desired smallness we would like to achieve/enhance for the state at finite time. We take~$\clP(\Bigcdot)=\norm{\Bigcdot}{\clH }$ if we wish the norm of the final state to be small. Assumption~\ref{A:P} requires that ~$\clP$ is bounded by the optimal cost. This is natural, and the assumption is made essentially to avoid  some particular settings. For example, assume that
we have a system as
 \begin{align}
\dot y&= y,\;& y(0)&=y_0\in V_1,\notag\\
\dot w&=w+Bu\;& w(0)&=w_0\in V_2.\notag
\end{align}
The system is in fact uncoupled. But, if we were uncertain of the modeling and of the dynamics of the component~$y$, we could assume that it was depending on~$w$.
If we would try to enhance the smallness of the norm of the entire state~$(y,w)\in\clH =V_1\times V_2$ at final time, through the minimization of a functional as
\begin{equation}\notag
\tfrac12\norm{w}{L^2((0,T),V_2)}^2+\tfrac12\norm{u}{L^2((0,T),\bbR^M)}^2+\tfrac12\norm{(y,w)}{\clH }^2,
\end{equation}
we can see that, if~$w_0=0$ and~$y_0\ne0$, then the optimal pairs~$(y,w,u)$ satisfy~$(w,v)=0$ and~$y(t)=\rme^{t}y_0$ for both the ITH and FTH problems. However, the optimal costs are different, the ITH one vanishes, while the FTH ones are given by~$\tfrac12\rme^{2T}\norm{(y_0,0)}{\clH }^2$. Note that in this case the ITH value function is independent of~$y$, $\fkV_s(y,w)=\fkV_s(0,w)$, and Assumption~\ref{A:P} does not hold for the chosen~$\clP(y,w)=\norm{(y,w)}{\clH }$.

\subsection{The (in)stability of the dynamics}
Let us consider the linear system
\begin{align}\label{ex1.lin}
\dot y= Ay+Bu,\qquad y(0)=z\in\bbR^N,
\end{align}
with~$A\in\bbR^{N\times N}$, ~$B\in\bbR^{N\times M}$, state~$y(t)\in\bbR^N$, and control~$u(t)\in\bbR^M$, ~$M\le N$.

For simplicity, we set~$\bfC=\bbR$,~$\clP=0$, ~$\clN(u)=\norm{u}{\ell^2}$, and~$\clQ(y)=\norm{\clQ y}{\ell^2}$ with~$\clQ\in\bbR^{q\times N}$, $q\le N$. Hence
\begin{equation}\notag
\clJ_{I}(y,u)=\tfrac12\norm{\clQ y}{L^2(I,\bbR^q)}^2+\tfrac12\norm{u}{L^2(I,\bbR^M)}^2.
\end{equation}

For arbitrary~$T>0$, with~$I_T=(0,T)$, following the proof of Theorem~\ref{T:limitOCP}, with~$s=0$, we can find
\begin{align}
&\clJ_{I^{T}}(y_{I^{T}}^{z},u_{I^{T}}^{z})
\le \fkV_0(z),\label{opt1ex1}
\end{align}
and then the inequality limit~\eqref{fin<inf}. Within the proof, we have subsequently
used Assumption~\ref{A:R} in order to show the reverse inequality limit~\eqref{inf<fin}, and finally derive the equality~\eqref{inf=fin}. We can see that if we were able to construct any other sequence~$(\widetilde y_{I^{{T}}}^{z},\widetilde u_{I^{{T}}}^{z})$, defined in~$\bbR_{0}$, from the FTH sequences~$(y_{I^{{T}}}^{z},u_{I^{{T}}}^{z})$, such that we would still have the inequality
\begin{align}
\clJ_{\bbR_0}(\widetilde y_{I^{{T}}}^{z},\widetilde u_{I^{{T}}}^{z})&
\le \clJ_{{\bbR_{0}}}(y_{\bbR_{0}}^{z},u_{\bbR_{0}}^{z}),\notag
\end{align}
and such that we could take a  subsequence weakly converging to a limit in~$\fkX_{\bbR_0}^z\subset\clC(\bbR_0,\bbR^N)\times L^2(\bbR_0,\bbR^M)$, then we could use the optimality of~$(y_{\bbR_{0}}^{z},u_{\bbR_{0}}^{z})$ to conclude that the limit is a minimizer and, in particular, the convergence of the optimal costs. This is, in fact,  the approach in~\cite[Ch.~III, sect.~6.3]{Lions71}, where the functions~$( y_{I^{{T}}}^{z}, u_{I^{{T}}}^{z})$ are simply extended by zero as
\begin{equation}\label{extby0}
(\widetilde y_{I^{{T}}}^{z}(t),\widetilde u_{I^{{T}}}^{z}(t))\coloneqq\begin{cases}
(y_{I^{{T}}}^{z}(t),u_{I^{{T}}}^{z}(t)),&\;\mbox{if }t\le T;\\
(0,0),&\;\mbox{if }t>T.
\end{cases}
\end{equation}
Note, however, that such sequence~$(\widetilde y_{I^{{T}}}^{z}(t),\widetilde u_{I^{{T}}}^{z}(t))$ is not in~$\fkX_{\bbR_0}^z$ because~$\widetilde y_{I^{{T}}}^{z}(t)$ is not necessarily continuous at~$t=T$. In the case~$A$ is stable, to overcome this issue,   in~\cite[Ch.~III, sect.~6.3]{Lions71},  the stability of the free dynamics is used to derive that~$y_{I^{{T}}}^{z}(T)\le C$ for some constant~$C$ independent of~$T$, which allow us to place the dynamics equation in a more general distributional setting and take the limit, leading to the convergence result; see~\cite[Ch.~III, sect.~6.3]{Lions71} for more details.

For general unstable systems, it is not clear whether we will necessarily have such a uniform boundedness for~$y_{I^{{T}}}^{z}(T)$. To illustrate this point, from~\eqref{ex1.lin} we obtain 
\begin{align}\notag
y(t)= \rme^{A t}z+{\textstyle\int_0^t}\rme^{A (t-\tau)} Bu(\tau)\,\rmd \tau
\end{align}
and, by~\eqref{opt1ex1}, we have that
\begin{align}
\tfrac12\norm{\clQ y_{I^{{T}}}^{z}}{L^2(I^{{T}},\bbR^N)}^2+\tfrac12\norm{u_{I^{{T}}}^{z}}{L^2(I^{{T}},\bbR^M)}^2
\le C_0\notag
\end{align}
with~$C_0\coloneqq\fkV_0(z)$ independent of~$T$. The uniform bound for the control gives us a uniform bound as
\begin{align}\notag
y_{I^{{T}}}^{z}(T)= \rme^{A T}z+{\textstyle\int_0^T}\rme^{A (T-\tau)} Bu_{I^{{T}}}^{z}(\tau)\,\rmd \tau\le C,
\end{align}
if $A$ is stable. But, if~$A$ is unstable we cannot guarantee such a uniform bound from a bound for the control alone (e.g., for~$u=0$ and a suitable~$z$, $y(T)$ will increase exponentially as~$T\to+\infty$).

We  have circumvented the need of such uniform bound by making Assumption~\ref{A:R}, which, moreover, allow us to remain in the functional space setting, by constructing appropriate extensions~\eqref{Th:concat}, in~$\fkX_{\bbR_0}^z$, of the FTH solutions, instead of the extensions as~\eqref{extby0}.

\subsection{The necessity of Assumption~\ref{A:R}}\label{sS:necAR}
We start with a particular case where Assumption~\ref{A:R} is a consequence of the other assumptions. For this purpose, let Assumption~\ref{A:exist-optim} hold true with $\norm{\clR(y)}{\bbR}^2\le\fkD_2(\norm{\clQ(y)}{\bbR}^2)$, where~$\fkD_2\in\fkI_0$. Then Assumption~\ref{A:R} also holds true. Indeed, for every~$T>s$,
with
\begin{align}
\vartheta&\coloneqq\min\left\{\left.\norm{\clQ(y_{I^{T}}^{z}(t))}{\bbR}^2\right| \tfrac{s+{T}}2\le t\le {T}\right\}\notag
\end{align}
we find that
\begin{align}
&\tfrac{{T}-s}2\vartheta\le\norm{\clQ(y_{I^{T}}^{z})}{L^2((\tfrac{s+{T}}2, {T}),\bbR)}^2\le 2\clJ_{I^{T}}(y_{I^{T}}^{z},u_{I^{T}}^{z})\notag
\end{align}
and by
$\clJ_{I^{{T}}}(y_{I^{T}}^{z},u_{I^{T}}^{z})\le\clJ_{I^{T}}^\clP(y_{I^{T}}^{z},u_{I^{T}}^{z})$, 
\begin{align}
&\vartheta
\le\tfrac4{{{T}}-s}\clJ_{I^{{T}_n}}^\clP(y_{I^{{T}}}^{z},u_{I^{{T}}}^{z}).\notag
\end{align}
Recalling~\eqref{opt1} and Assumption~\ref{A:P},
\begin{align}
&\vartheta
\le\tfrac4{{{T}}-s}\left(\fkV_s(z)+\tfrac12\fkD_1(\fkV_{T}(y_{\bbR_{s}}^{z}({{T}})))\right)\eqqcolon\Phi_T.\notag
\end{align}
Thus, choosing~${T}^\circ$ as
\begin{align}
&{T}^\circ\coloneqq\max\Bigl\{t\in[\tfrac{s+{{T}}}2, {{T}}]\left|\;\,\norm{\clQ(y_{I^{{T}}}^{z}(t))}{\bbR}^2
\le \Phi_T\right.\Bigr\},\notag
\end{align}
we obtain,~$\clQ(y_{I^{{T}}}^{z}({T}^\circ))\to0$, hence~$\clR(y_{I^{{T}}}^{z}({T}^\circ))\to0$ and, by Assumption~\ref{A:exist-optim},  ~$\fkV_{{T}^\circ}(y_{I^{{T}}}^{z}({T}^\circ))\to0$. We can conclude that Assumption~\ref{A:R} holds true. Note that, for a given~$r>0$, we choose~$T$ such that~$\tfrac{s+{{T}}}2>r$.

\medskip
Next we show that Assumption~\ref{A:R} is necessary for a large class of systems.
Let us assume, for simplicity, that our dynamics is autonomous so that the IFT value function~$\fkV=\fkV_s$ is independent of the initial time~$s$. By the finite cost condition in Assumption~\ref{A:exist-optim} the ``cost to go'' satisfies, see~\eqref{dpp-cor}, $\lim\limits_{t\to+\infty}\fkV(y_{\bbR_0}^{z}(t))=0$.
Now, if  Assumption~\ref{A:R} does not hold, then there are $z\in\clH $, $r>0$, and~$\varepsilon>0$, such that for all
$(T,t)\in\bbR_r\times[r,T]$ we have that~$\fkV(y_{(0,T)}^{z}(t))\ge\varepsilon$. Resuming, we have
\begin{equation}\label{cost2goFI}
\lim_{t\to+\infty}\fkV(y_{\bbR_0}^{z}(t))=0;\quad \inf_{t\in[r,T]}\fkV(y_{(0,T)}^{z}(t))\ge\varepsilon.
\end{equation}
Let us fix~$T_0\ge r$ so that~$\fkV(y_{\bbR_0}^{z}(t))\le\tfrac12\varepsilon$ for all~$t\ge T_0$.

If the value function is continuous, we have
\begin{align}\label{contV}
&\norm{y_{(0,T)}^{z}(t)-y_{\bbR_0}^{z}(t)}{\clH }<\delta\notag\\
\Longrightarrow&\norm{\fkV(y_{(0,T)}^{z}(t))-\fkV(y_{\bbR_0}^{z}(t))}{\bbR}<\tfrac12\varepsilon
\end{align}
for some~$\delta>0$. In this case, it follows that necessarily~$\norm{y_{(0,T)}^{z}(t)-y_{\bbR_0}^{z}(t)}{\clH }\ge\delta$, for all~$T>T_0$ and all~$t\in[T_0, T]$, otherwise
by~\eqref{contV} it would follow that~$\fkV(y_{(0,T)}^{z}(t))< \fkV(y_{\bbR_0}^{z}(t))+\tfrac12\varepsilon\le\varepsilon$, which would contradict~\eqref{cost2goFI}.
Therefore, we conclude that Assumption~\ref{A:R}  is necessary  in the case our dynamics is autonomous and the value function is continuous.

\subsection{The satisfiability of Assumption~\ref{A:R}}
We have seen that, together with the other assumptions,  Assumption~\ref{A:R} is  sufficient and necessary for the convergence of FTH solutions to IFT ones (at least for the meaningful class of autonomous systems with an associated continuous value function). Thus, in concrete applications we would like to verify the assumption in order to know whether we can expect such convergence. While the verification of the other assumptions is expected to follow, in general,  by standard arguments, the verification of Assumption~\ref{A:R} may require extra work. We shall revisit this point for the concrete examples addressed in the simulations.  

\section{Numerical examples}\label{S:numerics}
We illustrate the derived result. 
 We consider   illustrative examples given by  a nonlinear autonomous {\sc ode} and by a nonautonomous time-periodic linear  {\sc ode}, followed by the {\sc pde} Schl\"ogl model for non-equilibrium phase transitions in chemical reactions~\cite{Schloegl72}.

\subsection{A scalar nonlinear equation}\label{sS:Exscnonlin}
We consider the system
\begin{subequations}\label{ExAn}
\begin{equation}
\dot y= y^{2n-1}+u,\quad y(0)=z\in\bbR,\label{ExAny}
\end{equation}
with~$n\in\bbN_+$, $u=u(t)\in\bbR$,  and the cost
\begin{equation}\label{ExAnJ}
\clJ_{I}(y,u)\coloneqq\frac12\int_I \frac1{2n-1}y(t)^{4n-2}+ u(t)^2\,\rmd t.
\end{equation}
\end{subequations}
In this case, we can find the optimal ITH solutions analytically. The  ITH value function~$\fkV_s(z)=\fkV(z)$ is independent of~$s$, given by
\begin{subequations}\label{VFExAn}
\begin{align}
\fkV(z)&=\tfrac1{2n}wz^{2n},\label{VFExAn-V}
\intertext{where~$w$ solves the Riccati equation}
0&=2w-w^2  +\tfrac1{(2n-1)}.\label{VFExAn-Ric}
\end{align}
\end{subequations}
Indeed, as we  show next, $\fkV$ solves the Hamilton--Jacobi--Bellman equation
\begin{equation}\label{EXAnHJB}
0=\tfrac{\p}{\p t} \fkV +\min_u \{\tfrac{\p}{\p z} \fkV(f_0(y)+u)+J_{I}(y,u)\},
\end{equation}
where
\begin{equation}\notag
f_0(y)\coloneqq y^{2n-1};\quad J_{I}(y,u)\coloneqq\tfrac1{4n-2}y^{4n-2}+\tfrac12 u^2.
\end{equation}
Observe that, denoting~$W(y)\coloneqq\tfrac{\p}{\p y} \fkV(y)=wy^{2n-1}$, by (Karush--Kuhn--Tucker) first order optimality   conditions we can find that an optimal pair~$(y,u)$ satisfies
$u(t)=- W(y(t))$, hence
\begin{align}
\hspace{-1em}&\tfrac{\p}{\p t} \fkV+\min_u \{\tfrac{\p}{\p z} \fkV(f_0+u)+J_{I}\}\notag
\\
\hspace{-1em}&=\tfrac{\p}{\p t} \fkV +W(f_0- W)+\tfrac1{4n-2}y^{4n-2}+\tfrac12 W^2\notag
\\
\hspace{-1em}&=\tfrac{\p}{\p t} \fkV +Wf_0-\tfrac12 W^2 +\tfrac1{4n-2}y^{4n-2}\notag
\\
\hspace{-1em}&=\tfrac{\p}{\p t} \fkV +wy^{4n-2}-\tfrac12w^2 y^{4n-2} +\tfrac1{(4n-2)}y^{4n-2}.\notag
\end{align}
It is well known (as a consequence of the dynamic programming principle) that, for ITH problems and autonomous dynamics, we have that~$\tfrac{\p}{\p t} \fkV=0$, which together with~\eqref{VFExAn-Ric} give us that~$\fkV$ solves~\eqref{EXAnHJB}, since
\begin{align}
 &\tfrac{\p}{\p t} \fkV+\min_u \{\tfrac{\p}{\p z} \fkV(f_0+u)+J_{I}\}\notag\\
&=0 +(2w-w^2  +\tfrac1{(2n-1)})y^{4n-2}=0.\notag
\end{align}

The two solutions of~\eqref{VFExAn-Ric} are
\begin{equation}\notag
w^\pm=\tfrac{1}{-2}\left(-2\pm\sqrt{4+4(2n-1)^{-1}}\right)=1\mp\xi_n
\end{equation}
and since the value function is nonnegative, we find
\begin{equation}\notag
w=w_n\coloneqq  1+\xi_n,\mbox{ with }\xi_n\coloneqq\sqrt{1+(2n-1)^{-1}},
\end{equation}
which leads us to the dynamics~$\dot y= y^{2n-1} + u=  y^{2n-1} - w_ny^{2n-1}$, that is,
the solution~$y=y_\infty$ of the ITH optimal control problem satisfies, for~$t>0$,
\begin{equation}\label{EXAn-optimdyn}
\dot y_\infty=-\xi_n y_\infty^{2n-1}, \quad y_\infty(0)=y_0.
\end{equation}

We compute~$y_\infty$ numerically, for time~$t\in[0,4]$, which we then compare with the computed solutions~$y_{T}$ corresponding to the FTHs for~${T}\in\{1,2,3,4\}$.
We also compare these solutions with the plotted analytic expression~$y_{\rm ex}$ for the solution of~\eqref{EXAn-optimdyn}, namely,
\begin{subequations}\label{poly-yVex}
\begin{equation}\label{poly-yex}
\!\!\!\!y_{\rm ex}(t)\!=\!\begin{cases}
y_0\rme^{-\xi_n t},,\;&\!\!n=1;\\
(y_0^{-2n}+2(n-1)\xi_n t)^{-\frac1{2(n-1)}},\;&\!\!n>1.
\end{cases}
\end{equation}
Further, we compare the value function~\eqref{VFExAn-V},
\begin{equation}\label{poly-Vex}
\fkV(t)=\tfrac{1}{2n}w_ny_0^{2n}=\tfrac{1+\xi_n}{2n}y_0^{2n}
\end{equation}
\end{subequations}
with the computed costs~$\bfJ_{T}=\clJ_{(0,{T})}^0(y_{(0,{T})}^{y_0},u_{(0,{T})}^{y_0})$ for the FTHs problems in~$(0,{T})$.

We present the results of simulations for the case~$n=2$, with~$y_0=1$.
Note that the solution of the free dynamics blows up in finite time, hence we only plot the controlled solutions
in Fig.~\ref{fig:polyA1n2}. We can see that the FTH optimal pairs~$(y_T,u_T)$  approximate the ITH one~$(y_\infty,u_\infty)$. The evolution of the norms of the components of the error~$(y_T-y_\infty,u_T-u_\infty)$ is shown for time $t\in[0,1]$ (i.e., $(s,r)=(0,1)$ in~\eqref{weakconv-sr}). Thus, the simulations agree with the theoretical result.
\begin{figure}[ht]
\centering
\subfigure
{\includegraphics[width=0.45\textwidth]{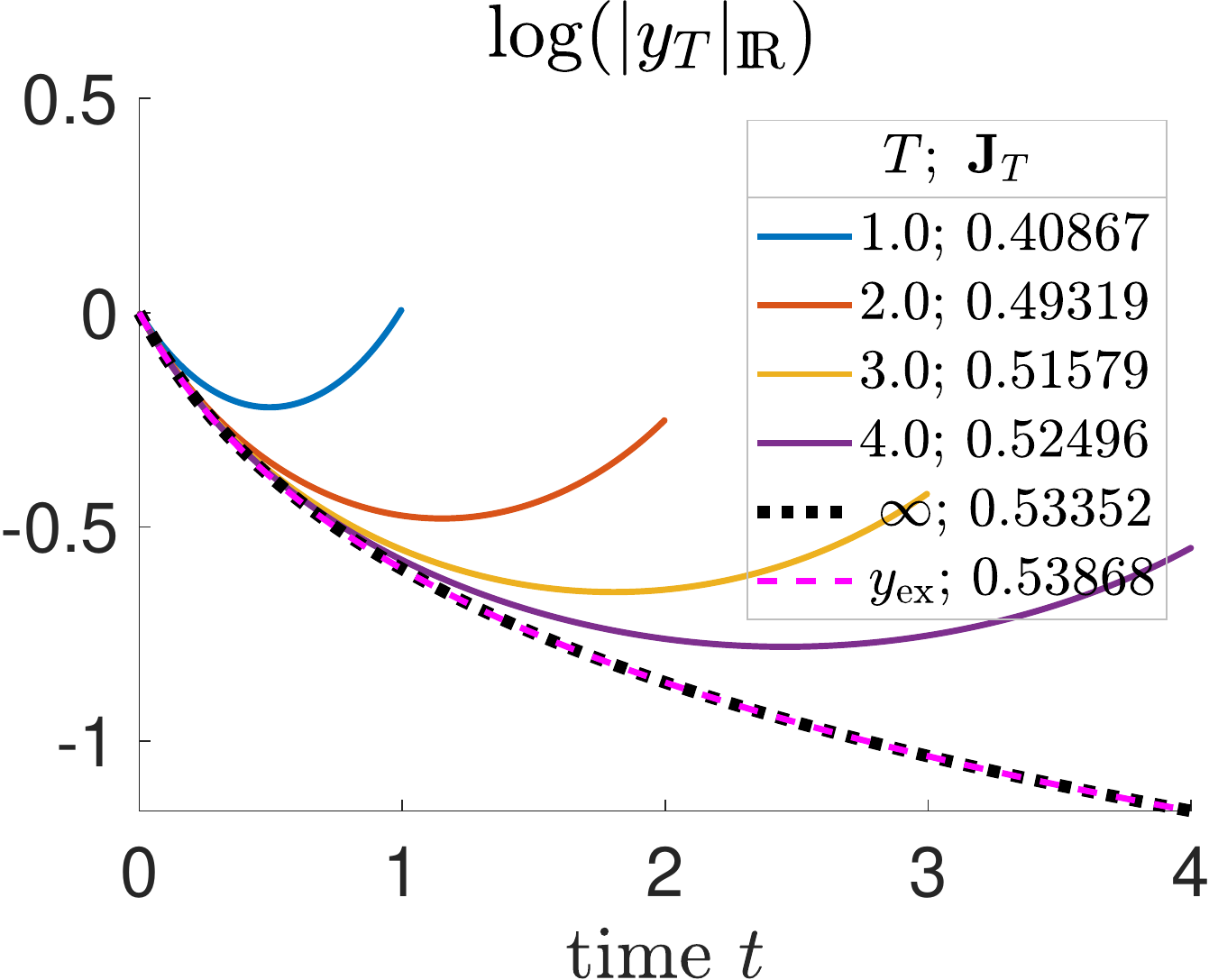}}
\quad
\subfigure
{\includegraphics[width=0.45\textwidth]{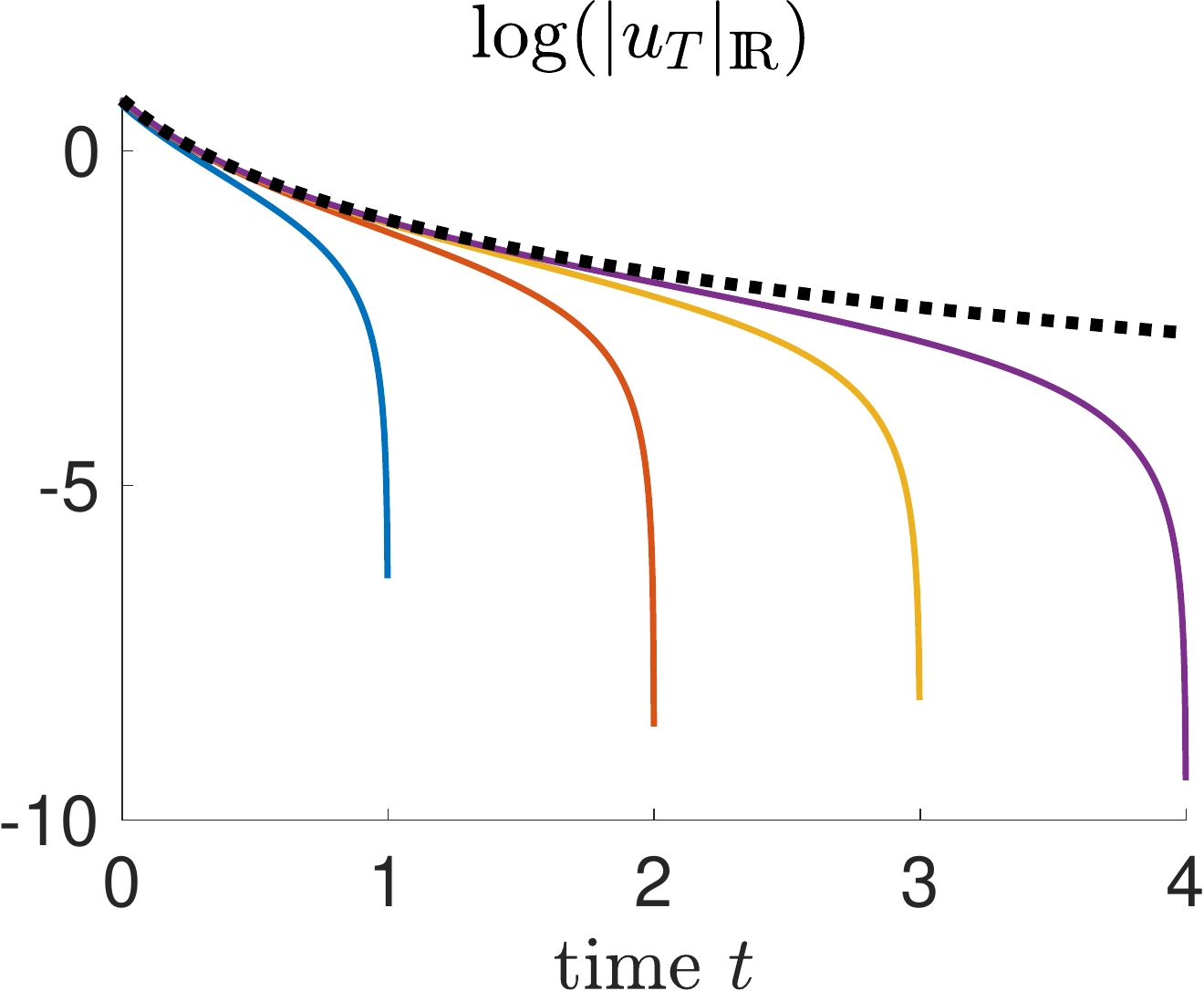}}
\\
\subfigure
{\includegraphics[width=0.45\textwidth]{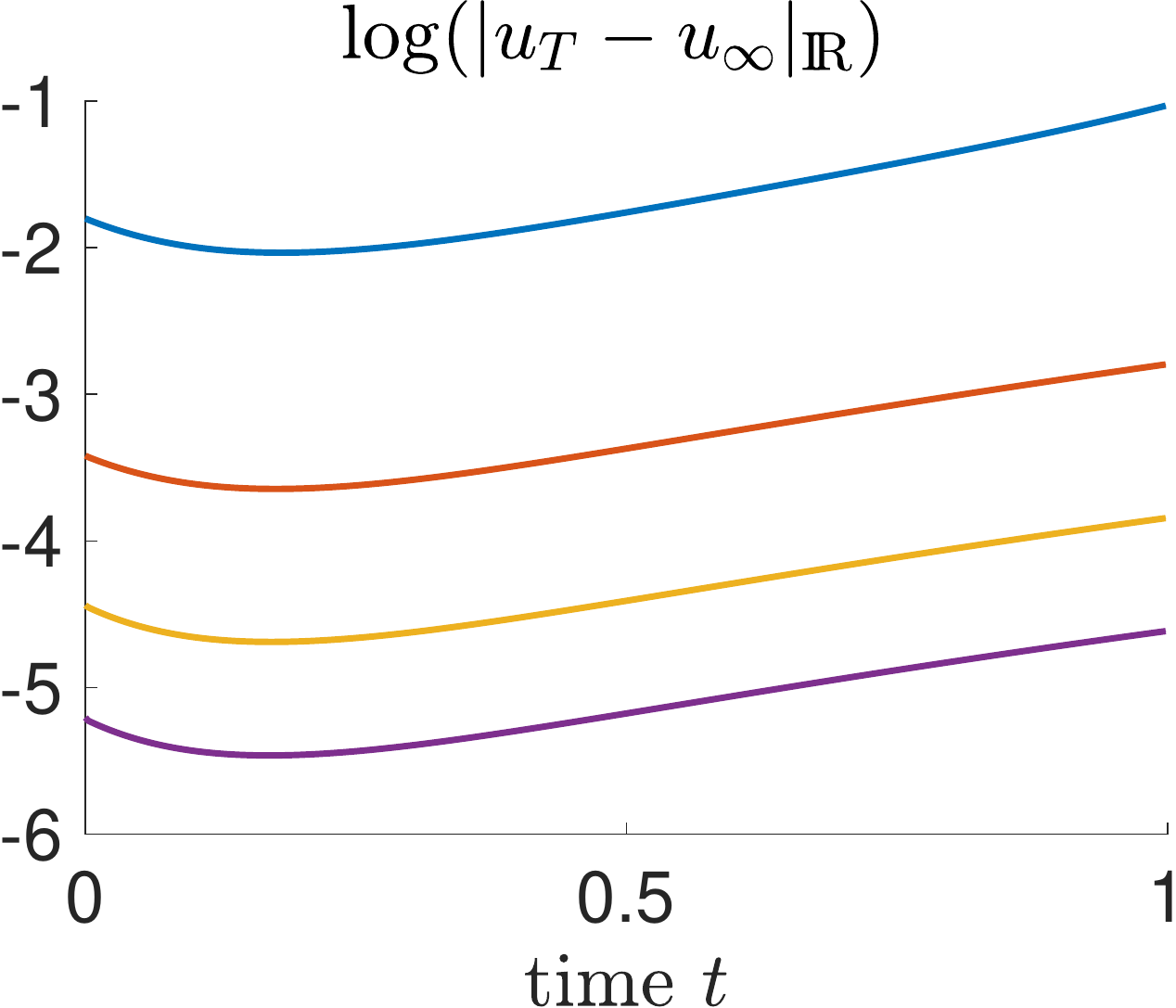}}
\quad
\subfigure
{\includegraphics[width=0.45\textwidth]{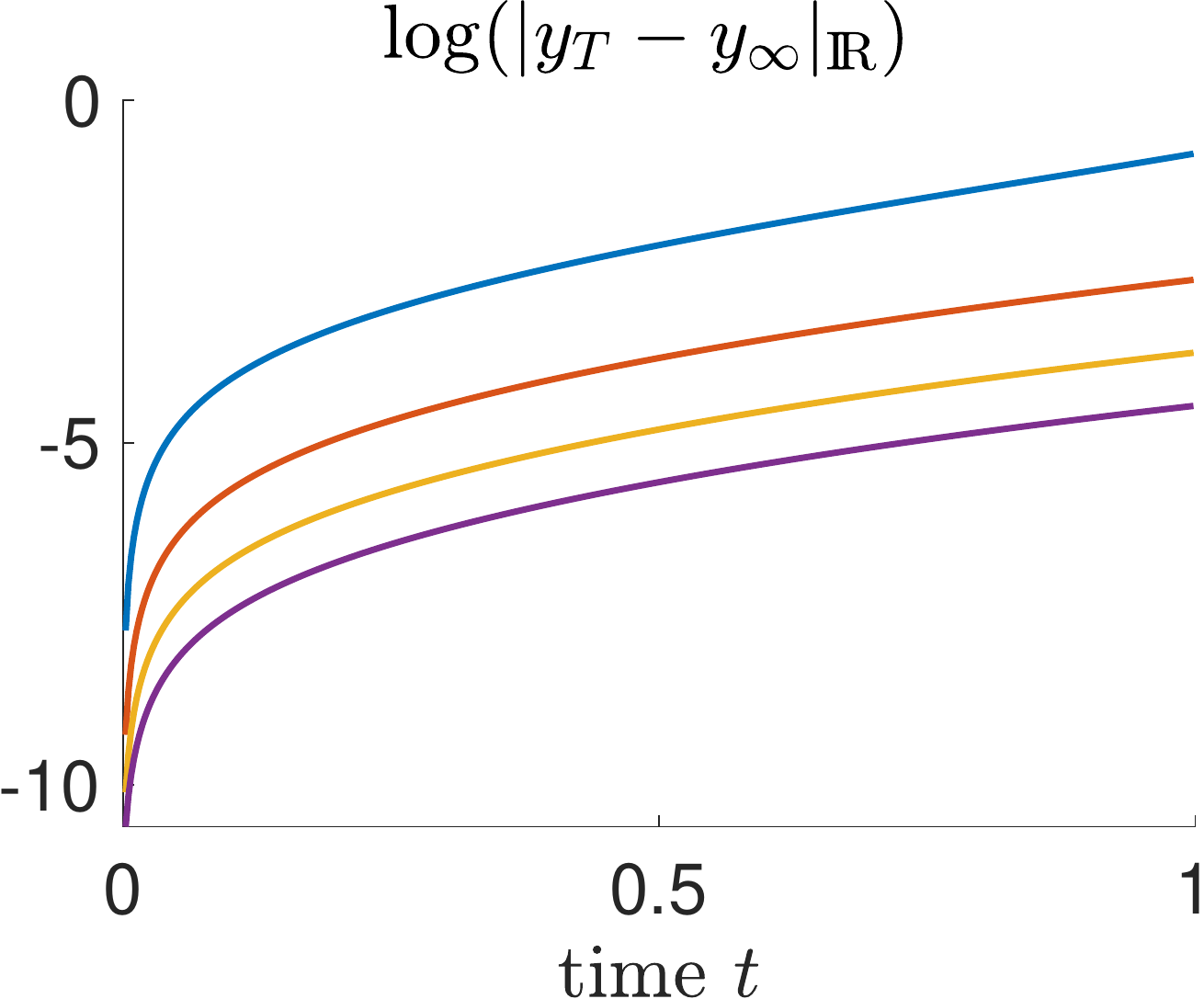}}
\caption{Nonlinear dynamics~\eqref{ExAn}: $n=2$.}
\label{fig:polyA1n2}
\end{figure}

\medskip\noindent
{\bf Remarks on the assumptions}.
Concerning the verification of the assumptions in section~\ref{S:optimalcontrol}, we can take~$\clX^u=L^{2}(I,\bbR)$ and~$\clX^y=W^{1,2}(I,\bbR)\xhookrightarrow{}\clC(\overline I,\bbR)$.
The cost~\eqref{ExAnJ} corresponds to taking $\clP(w)=0$, $\clQ(y)=(2n-1)^{-\frac12}y^{2n-1}$, and~$\clN(u)=u$ in~\eqref{JzP}

We restrict ourselves to the verification of Assumption~\ref{A:R}.
In fact, it is enough to show that our problem can be placed into the particular setting, with~$\norm{\clR(y)}{\bbR}^2\le\fkD_2(\norm{\clQ(y)}{\bbR}^2)$,  discussed in section~\ref{sS:necAR} to conclude that Assumption~\ref{A:R} is satisfied.
To this end, recalling~\eqref{poly-Vex}, we note that~$\norm{\clQ(y)}{\bbR}^2=\frac1{2n-1}y^{4n-2}=\beta_n(\fkV(y))^\frac{4n-2}{2n}$, with~$\beta_n\coloneqq\tfrac{1}{2n-1}(\tfrac{2n}{1+\xi_n})^\frac{4n-2}{2n}$. Now, we can take~$\clR=\sqrt{2\fkV}$ (cf. Assum.~\ref{A:exist-optim}), and obtain $\norm{\clR(y)}{\bbR}^2=\fkD_2(\norm{\clQ(y)}{\bbR}^2)$, with~$\fkD_2(w)\coloneqq2(\tfrac1{\beta_n}w)^\frac{2n}{4n-2}$.

\medskip\noindent
{\bf Remarks on computational details.}
We computed  the optimal solutions~$y_{T}$, for~\eqref{ExAn}, by solving the corresponding first-order optimality system iteratively by following a gradient descent algorithm with Barzilai--Borwein steps~\cite{BarzBorw88}.
In particular, we have followed~\cite{AzmiKun20,AzmiKun21}, where we find  applications to control problems.

\subsection{A time-periodic  linear dynamics}\label{sS:Exper}
We consider the time-periodic dynamics
\begin{subequations}\label{ExPer}
\begin{align}
&\dot y= Ay+B u,\quad y(0)=z\in\bbR^{2\times1},\\
\mbox{with }&A(t)=\varphi(t)\begin{bmatrix}1&1\\1&-3\end{bmatrix},\quad B=\begin{bmatrix}0\\1\end{bmatrix},\\
\mbox{where }&\varphi(t)=1+|\cos(\pi t)|,
\end{align}
with control~$u=u(t)\in\bbR$, 
and minimize, the classical energy cost~$\clJ_{I}^{\clP}=\clJ_{I}^{P}(y,u)$ as
\begin{equation}
\!\!\!\clJ_{I}^{\clP}\!=\tfrac12\norm{y}{L^2(I,\bbR^2)}^2+\tfrac12\norm{u}{L^2(I,\bbR)}^2+\tfrac12\chi \norm{y(\iota_1)}{\bbR^2}^2
\end{equation}
with~$\chi\ge0$.
\end{subequations}
Hence, $\clQ(y)=\norm{y}{\ell^2}$, $\clN(u)=u$, and $\clP(w)=\sqrt{\chi}^\frac12\norm{w}{\ell^2}$.

We  take the initial state~$z=y_0=\begin{bmatrix}1&0\end{bmatrix}^\top$ and present results for the  cases~$\chi\in\{0,1\}$.
We can see that the free dynamics ($u=0$) is unstable. 
Indeed, the eigenvectors of~$A(t)$ are time-independent and given by $e_\pm=(1, -2\pm\sqrt{5})$ with associated time-dependent eigenvalues~$
\alpha_\pm\coloneqq\varphi(t)(-1\pm\sqrt{5})$. Then, writing~$z=z_+e_++z_-e_-$ we find~$z_+\ne0$ and decomposing the solution as~$y(t)=y_+(t)e_++y_-(t)e_-$, we find
\begin{align}
&\dot y_\pm=\alpha_\pm y_\pm,\quad y_\pm(0)=z_\pm,\notag
\end{align}
and~$y_\pm(t)=\rme^{\int_0^t\alpha_\pm(\tau)\,\rmd\tau}z_\pm$.
In particular, $\norm{y(t)}{\bbR^2}\ge\norm{y_+(t)}{\bbR}\ge \rme^{t}\norm{z_+}{\bbR}\to+\infty$.

By the (nonautonomous) Kalman-like rank condition~\cite[Thm.~3]{SilvermanMeadows67}, the controllability at any positive time~$T>0$ follows from the fact that the matrix
$\begin{bmatrix}
B&-A(t)B
\end{bmatrix}$  has full rank (for every~$t>0$). In particular, the ITH optimal control problems are well defined (e.g., by taking a control driving the system to~$y(T)=0$ and switching the control off for~$t>T$).

Note that~$\varphi$ is time-periodic with period~$\varpi=1$. We know that the optimal control is defined, for FTHs (cf.~\cite[Part~III, sects.~1.3 and~1.4]{Zabczyk08}), by the nonnegative solution of the differential Riccati equation
\begin{subequations}\label{Exper-Ricc}
\begin{align}
&\hspace{-2em}\dot \Pi_T+A^\top \Pi_T+ \Pi_TA-\Pi_TBB^\top \Pi_T +\Id=0,\\
&\hspace{-2em}\Pi_T(T)=\chi\Id,\qquad t\in(0,T),
\end{align}
and, for the ITH (cf.~\cite{BarRodShi11,Rod22-eect}), by the nonnegative solution of the periodic Riccati equation
\begin{align}
&\hspace{-2em}\dot \Pi_\infty+A^\top \Pi_\infty+ \Pi_\infty A-\Pi_\infty BB^\top \Pi_\infty +\Id=0,\\
&\hspace{-2em}\Pi_\infty(t)=\Pi_\infty(t+\varpi),\qquad t\in(0,+\infty).
\end{align}
\end{subequations}
Namely, the optimal pairs  satisfy, 
\begin{align}
\hspace{-1em}u_T^z&=-B^\top \Pi_Ty_T^z,&\quad\dot y_T^z&= \left(A-BB^\top \Pi_T\right)y_T^z,
\notag\\
\hspace{-1em}u_\infty^z&=-B^\top \Pi_\infty y_\infty^z,&\quad\dot y_\infty^z&= \left(A-BB^\top \Pi_\infty\right)y_\infty^z,\notag
\end{align}
with~$y_\infty^z(0)=z=y_T^z(0)$.
The results for~$\chi=0$  are shown in Fig.~\ref{fig:periodic_chi0}. We can see that the FTH solutions converge to the ITH one.
\begin{figure}[ht]
\centering
\subfigure
{\includegraphics[width=0.45\textwidth]{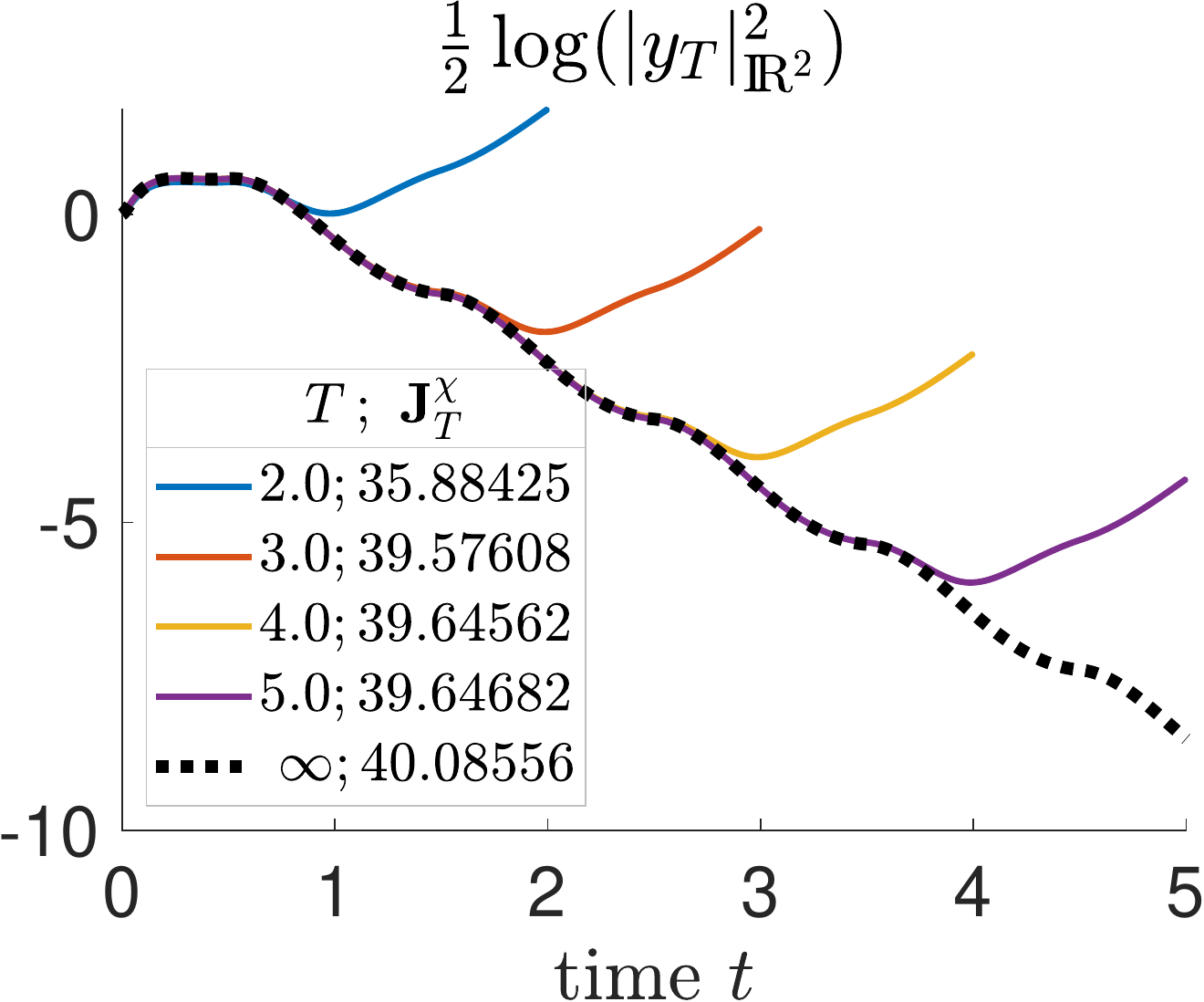}}
\quad
\subfigure
{\includegraphics[width=0.45\textwidth]{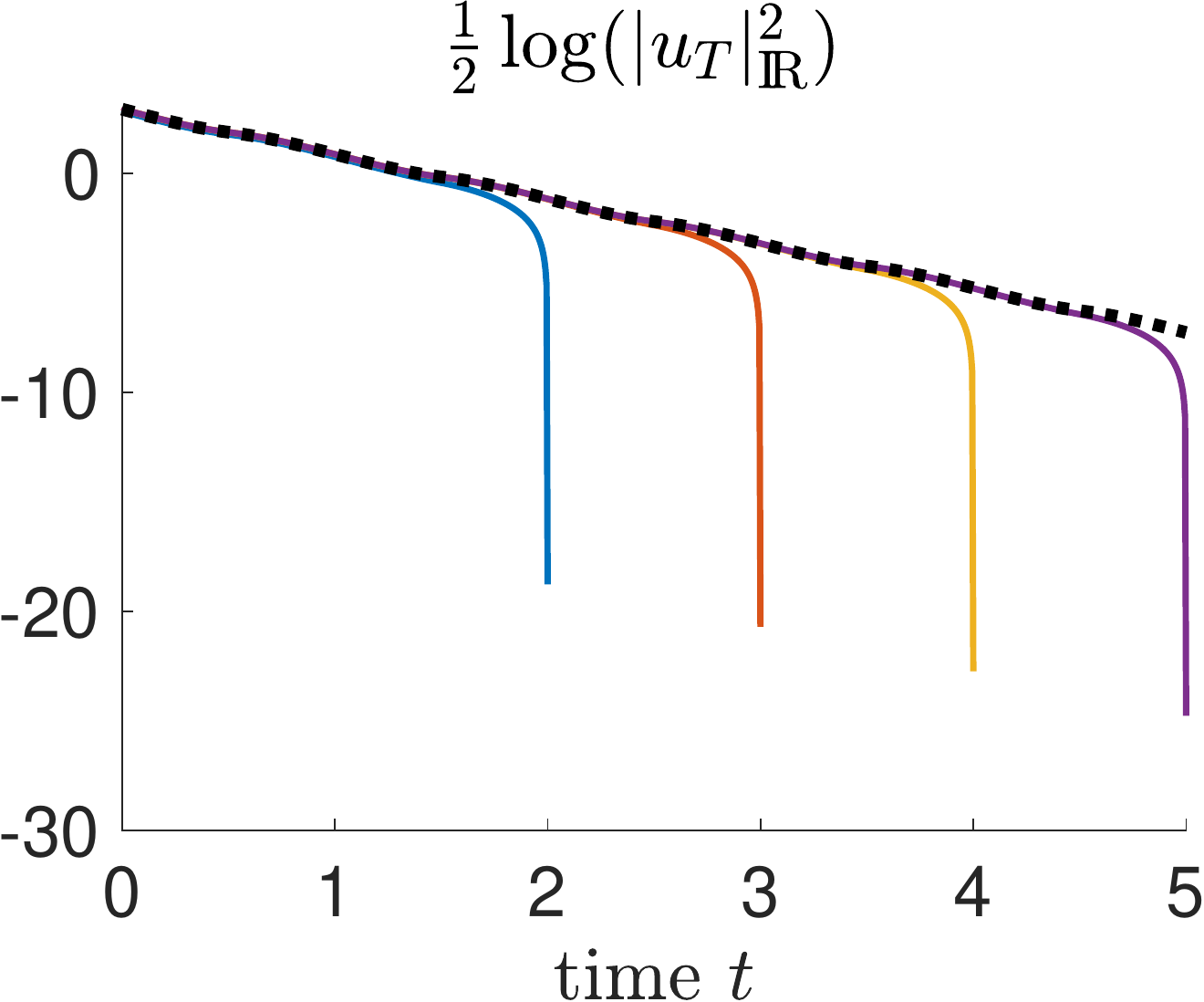}}
\\
\subfigure
{\includegraphics[width=0.45\textwidth]{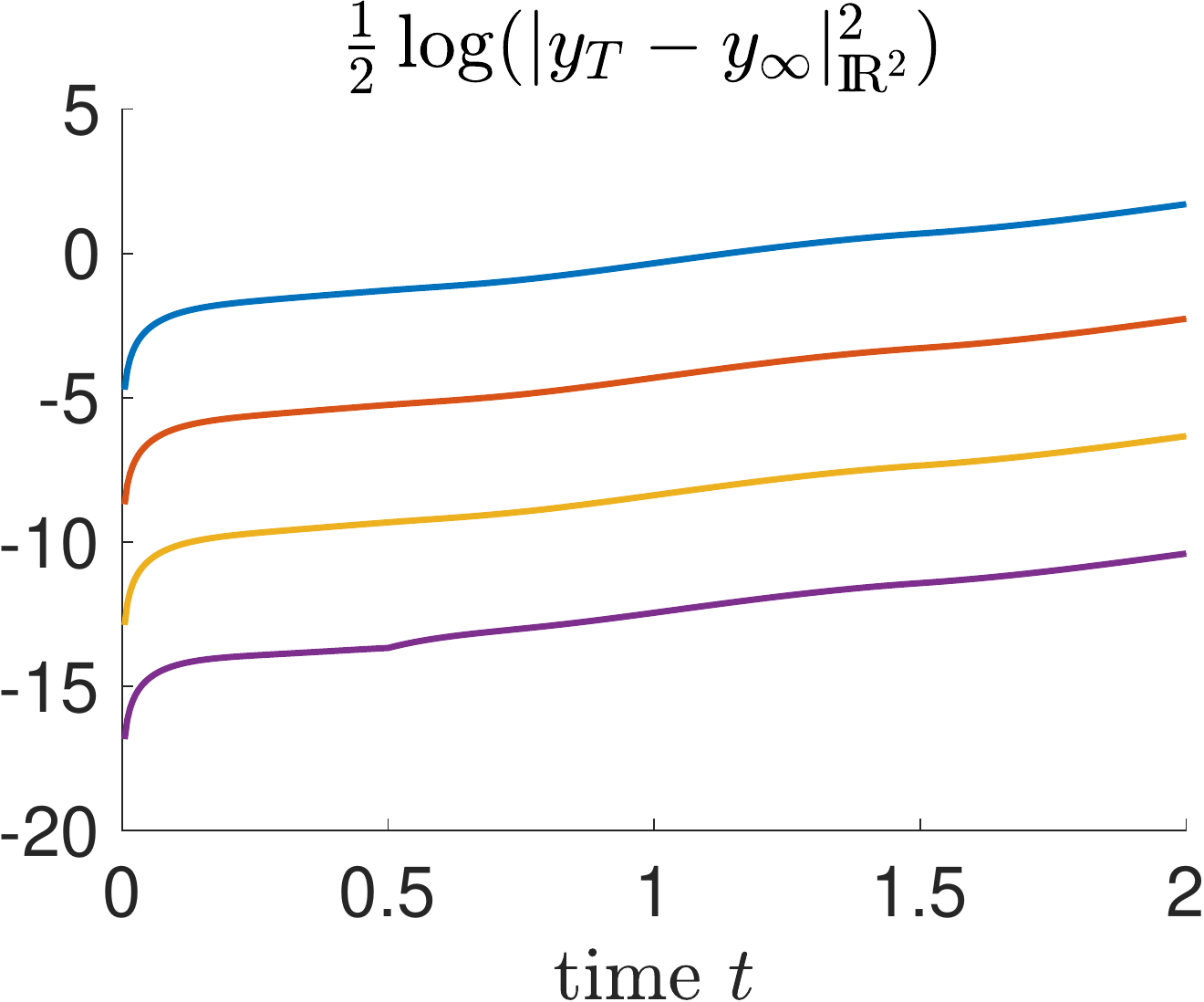}}
\quad
\subfigure
{\includegraphics[width=0.45\textwidth]{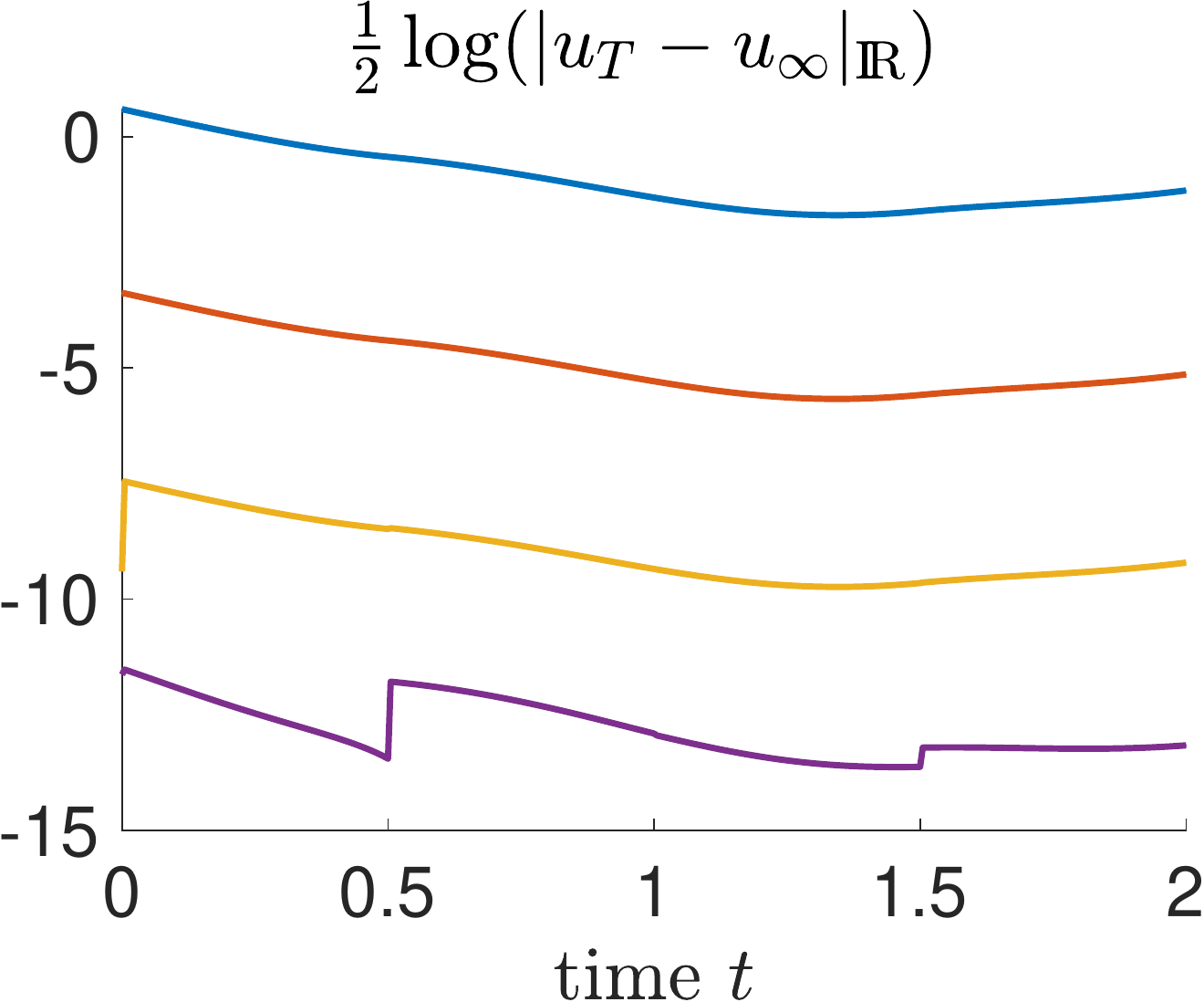}}
\caption{Time-periodic dynamics~\eqref{ExPer}. Case~$\chi=0$.}
\label{fig:periodic_chi0}
\end{figure}
The same convergence is observed in  Fig.~\ref{fig:periodic_chi1} for~$\chi=1$. Further, we observe that the smallness of~$\norm{y(T)}{\ell^2}$ is enhanced for~$\chi=1$.
\begin{figure}[ht]
\centering
\subfigure
{\includegraphics[width=0.45\textwidth]{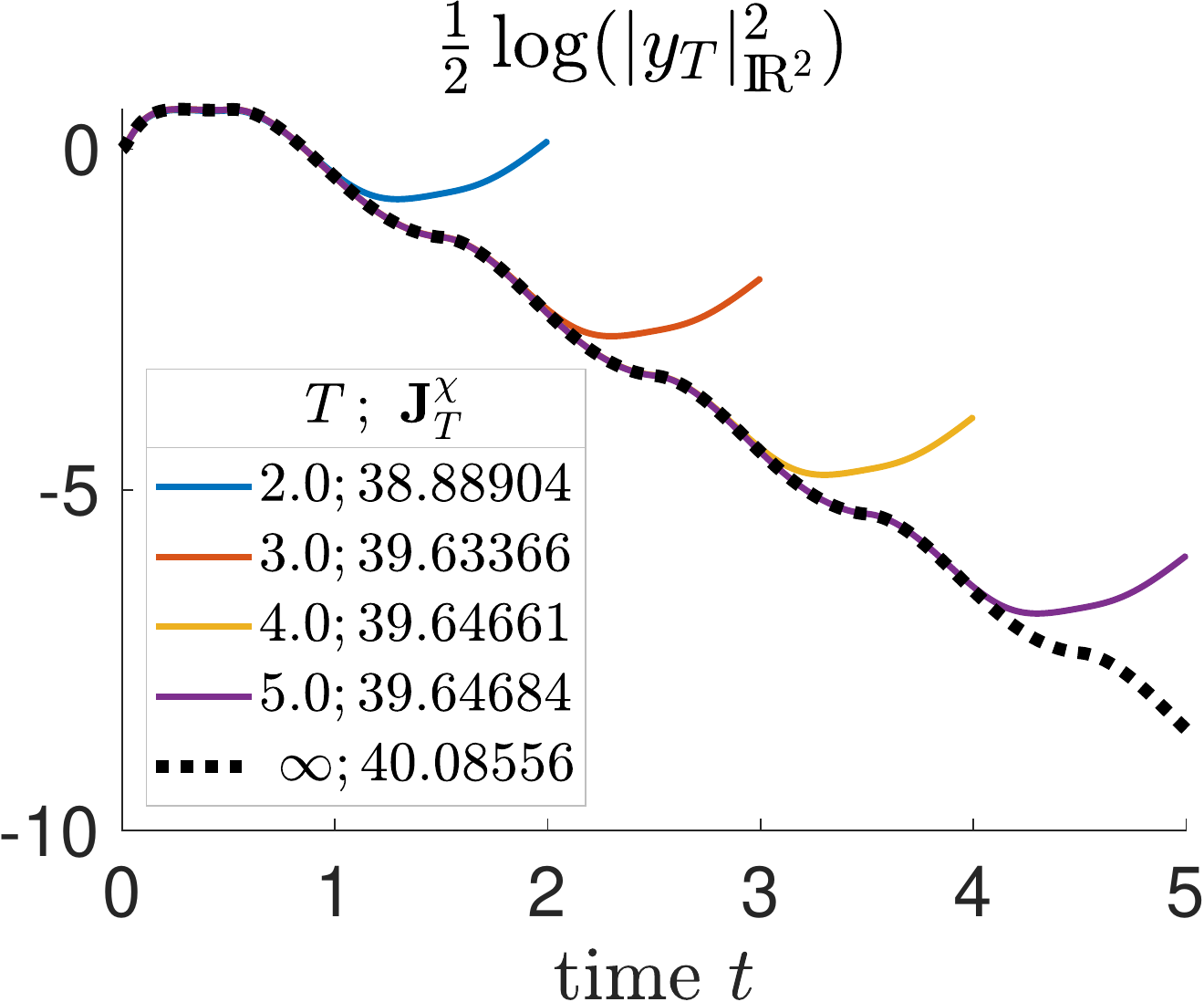}}
\quad
\subfigure
{\includegraphics[width=0.45\textwidth]{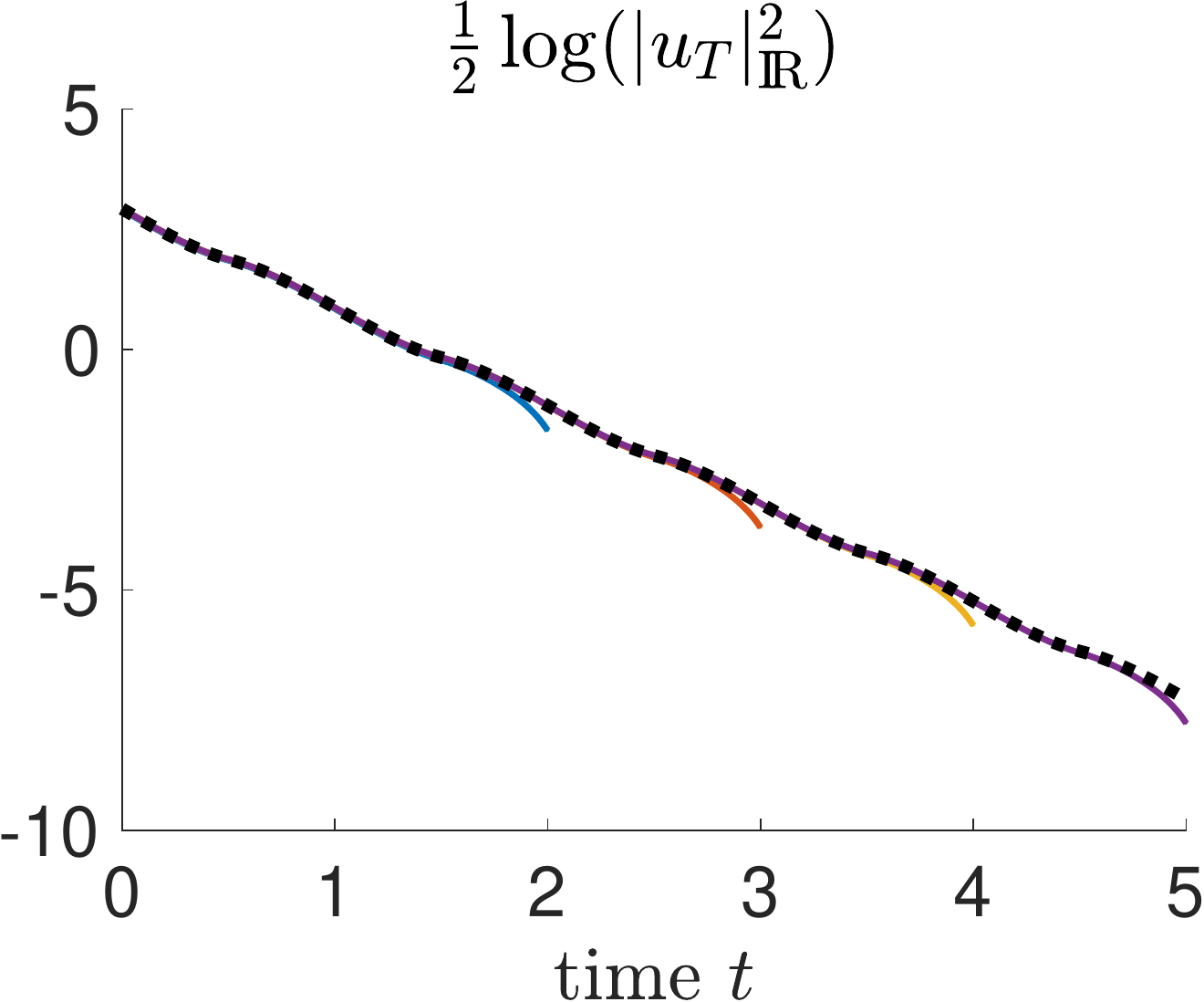}}
\\
\subfigure
{\includegraphics[width=0.45\textwidth]{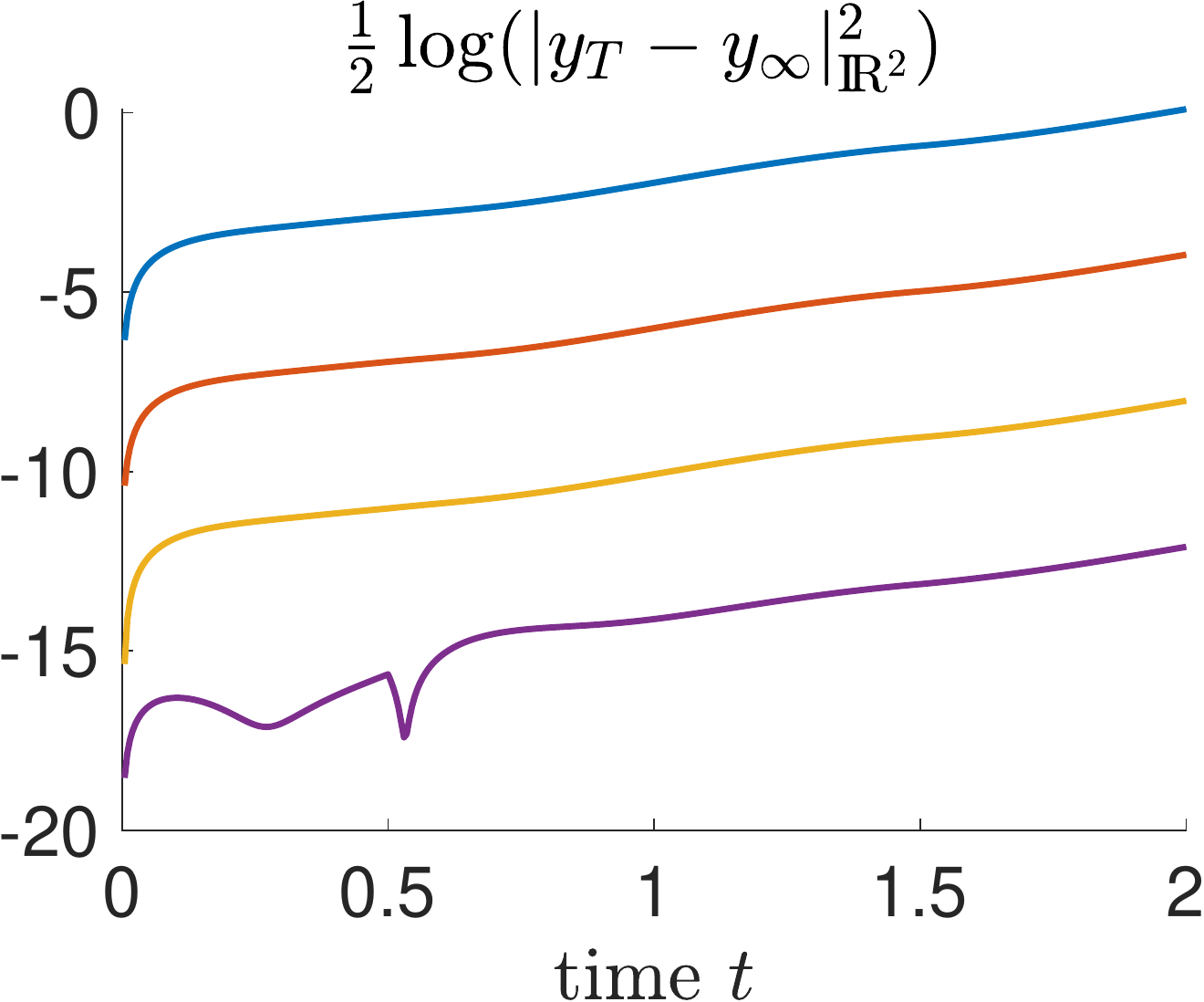}}
\quad
\subfigure
{\includegraphics[width=0.45\textwidth]{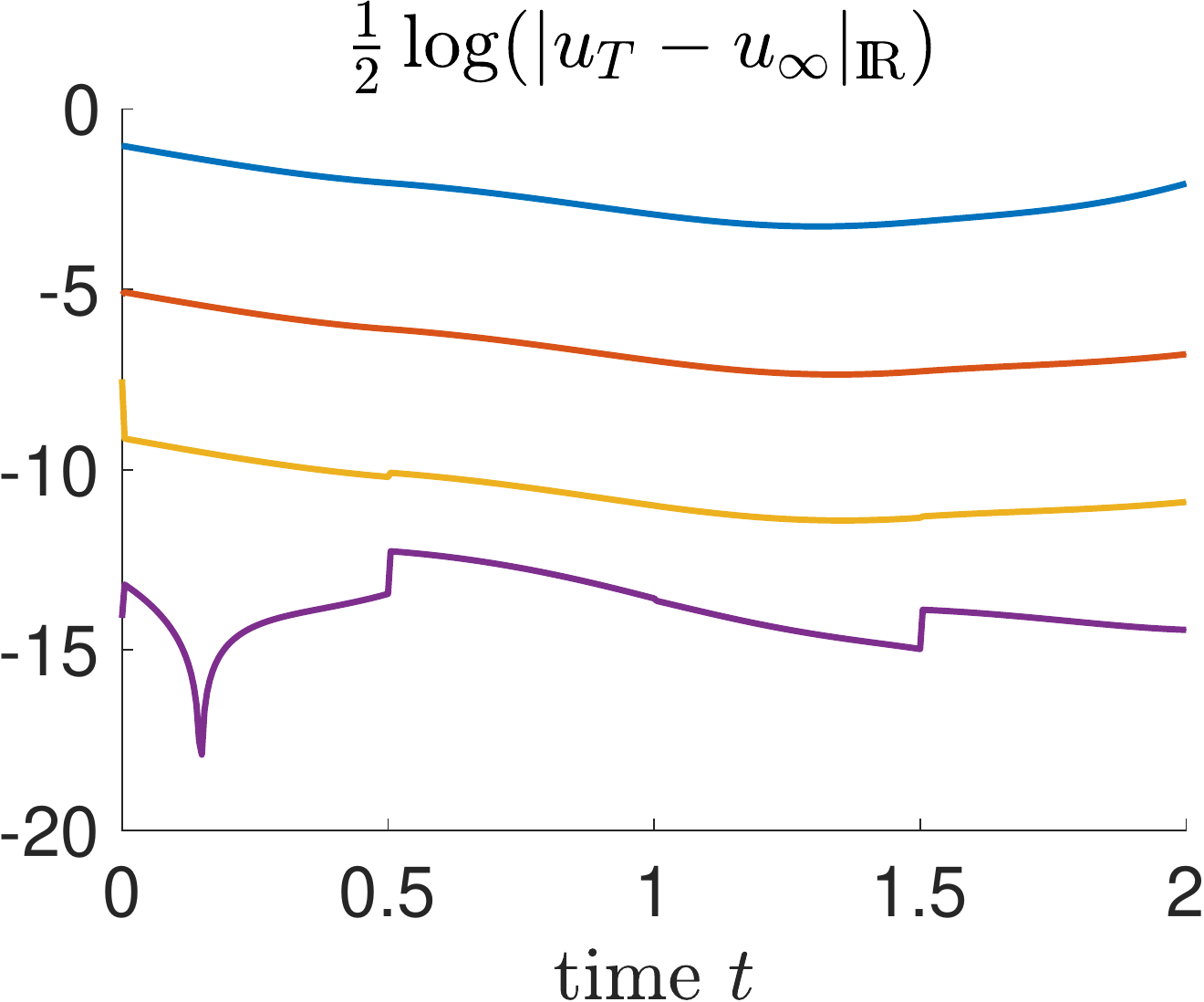}}
\caption{Time-periodic dynamics~\eqref{ExPer}. Case~$\chi=1$.}
\label{fig:periodic_chi1}
\end{figure}

\begin{remark}
For~\eqref{ExPer},  the norm of the state is strictly increasing at time $t=0$, for every control~$u(t)$. Indeed, 
\begin{align}
\tfrac{\rmd}{\rmd t}\norm{y(t)}{\bbR^2}^2
&=2(Ay(t)+Bu(t),y(t))_{\bbR^2}\notag
\end{align}
and,  using~$y_0^\top B=0$ and~$\varphi(0)=2$,
\begin{equation}\notag
\tfrac{\rmd}{\rmd t}\norm{y(t)}{\bbR^2}^2\rest{t=0}=2y_0^\top (A(0)y_0+Bu(0))=4.
\end{equation}
\end{remark}

\medskip\noindent
{\bf Remarks on the assumptions}.
We can choose~$\clX_I^u=L^2(I,\bbR)$ and~$\clX_I^y=W^{1,2}(I,\bbR^2)\subset\clC(\overline I,\bbR^2)$, and verify all the assumptions. Here, we restrict ourselves to Assumption~\ref{A:R}, which we verify  by placing our problem in the particular setting discussed in section~\ref{sS:necAR}, $\norm{\clR(y)}{\bbR}^2\le\fkD_2(\norm{\clQ(y)}{\bbR}^2)$. To this end,  we recall that for matrix linear problems in quadratic costs, the ITH optimal cost is given by~$\tfrac12z(t)^\top\Pi_\infty(t) z(t)$ where $\Pi_\infty$ solves the Riccati equation in~\eqref{Exper-Ricc}. Since~$\varphi$ is continuous and bounded, we can show that there will exist a constant~$C>0$ such that, for all~$s\ge0$ and all~$z\in\bbR^2$, $\fkV_s(z)\le C\norm{z}{\ell^2}^2$. Thus, by setting~$\clR=\sqrt{2\fkV}$ (cf. Assum.~\ref{A:exist-optim}) we arrive at~$\norm{\clR(y)}{\bbR}^2\le2C\norm{\clQ(y)}{\bbR}^2$.

\medskip\noindent
{\bf Remarks on computational details.}
For~\eqref{ExPer}, the optimal solutions were found by solving the  Riccati equations in~\eqref{Exper-Ricc}; following~\cite{Rod22-eect}.

\subsection{The Schl\"ogl parabolic equation}\label{sS:schlogl}
We consider the semilinear Schl\"ogl  model
\begin{subequations}\label{ExSch}
\begin{align}
&\hspace{-1em}\tfrac{\p}{\p t}y=\nu\Delta y -(y-\zeta_1)(y-\zeta_2)(y-\zeta_3)+Bu,\\
&\hspace{-1em}y(0)=y_0\in\clH ,\quad \tfrac{\p}{\p x}y(0,t)=0=\tfrac{\p}{\p x}y(1,t),
\end{align}
in the spatial interval~$\Omega\coloneqq(0,1)$ under homogeneous Neumann boundary conditions, and with
\begin{equation}\label{nuzeta}
\nu=0.1,\quad\zeta=(-1, 0, 2),\quad y_0(x)=\cos(2\pi x^2).
\end{equation}
We take the cost functional as
\begin{equation}\label{J-Schl}
\clJ_{\bbR_0}\coloneqq \tfrac12\gamma\norm{P_{\clE_N} y}{L^2(\bbR_0,{H})}^2+\tfrac12\norm{u}{L^2(\bbR_0,\bbR^{M})}^2,
\end{equation} 
\end{subequations}
where~$P_{\clE_N}\colon H\to\clE_N$ is the orthogonal projection in~$H\coloneqq L^2(\Omega)$ onto the space~$\clE_N$ spanned by the first~$N$ eigenfunctions of the Neumann Laplacian, that is, $\clQ(y)=\gamma^\frac12\norm{P_{\clE_N} y}{H}$, $\clN(u)=\norm{u}{\ell^2}$, and~$\clP=0$. 

The controls are subject to box constraints
\begin{equation}\notag
-C_u\le u_j(t)\le C_u,\quad \mbox{for all}\quad 1\le j\le M,
\end{equation}
for a given constant~$C_u>0$. The existence of stabilizing controls has been shown in~\cite{AzmiKunRod21-arx} for actuators taken as indicator functions~$\indf_{\omega_j}$  of the spatial subdomains
\begin{equation}\notag
\omega_j\coloneqq(c_j-\tfrac{\varrho}{2M},c_j+\tfrac{\varrho}{2M}),\quad c_j\coloneqq\tfrac{2j-1}{2M},\quad 1\le j\le M,
\end{equation}
where~$\varrho\in(0,1)$.  Hence, 
$
Bu\coloneqq{\textstyle\sum\limits_{j=1}^{M}}u_j(t)\indf_{\omega_j}.
$

We present numerical results for the case
\begin{equation}\notag
\gamma= 50,\quad C_u=30,\quad N=20,\quad M=12,\quad \varrho=\tfrac1{10}.
\end{equation}

The initial state and some other time-snapshots of the solution of the free-dynamics ($u=0$) are shown in
Fig.~\ref{fig:Schl-free}.
\begin{figure}[ht]
\centering
\subfigure[Free dynamics.\label{fig:Schl-free}]
{\includegraphics[width=0.45\textwidth]{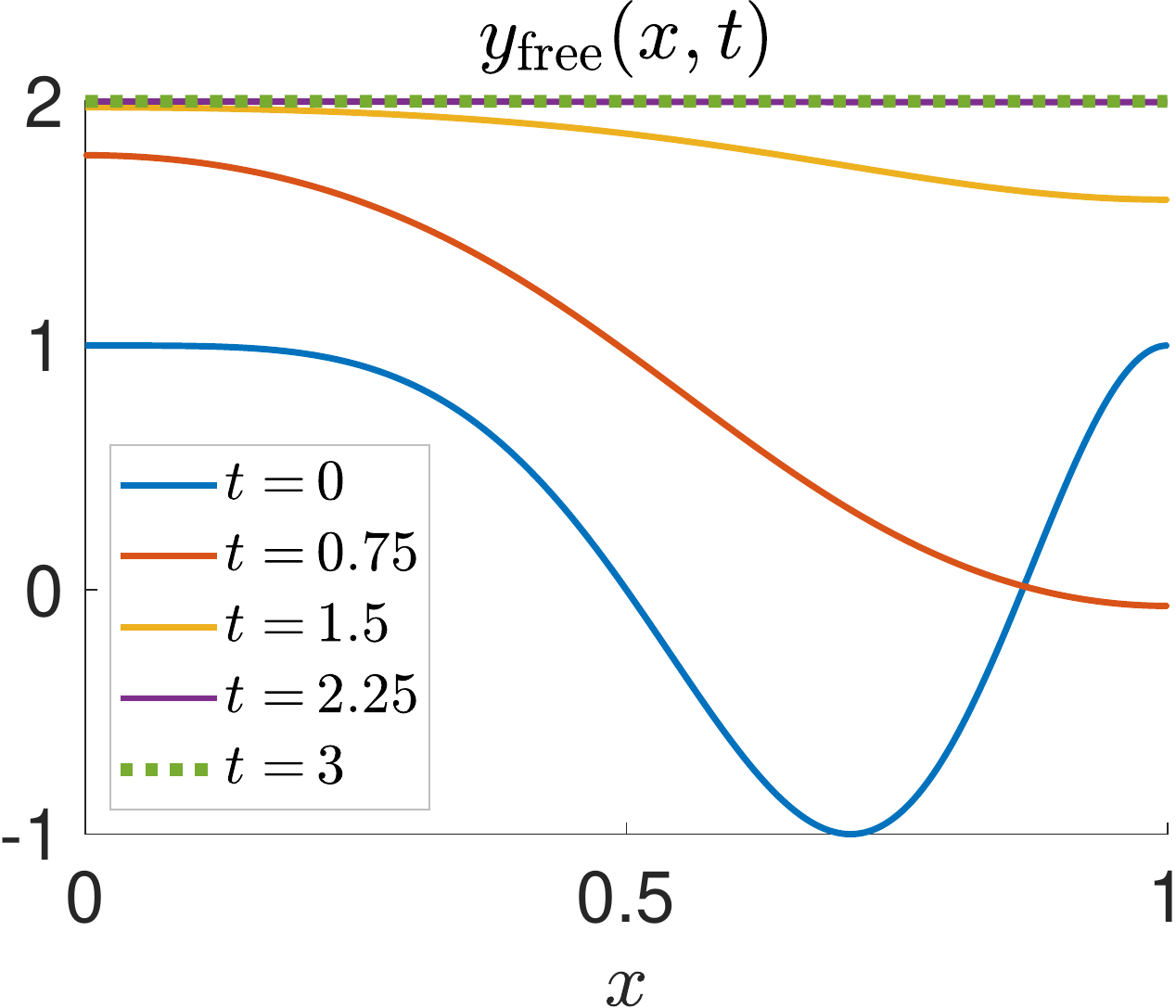}}
\quad
\subfigure[With optimal control.\label{fig:Schl-cont}]
{\includegraphics[width=0.45\textwidth]{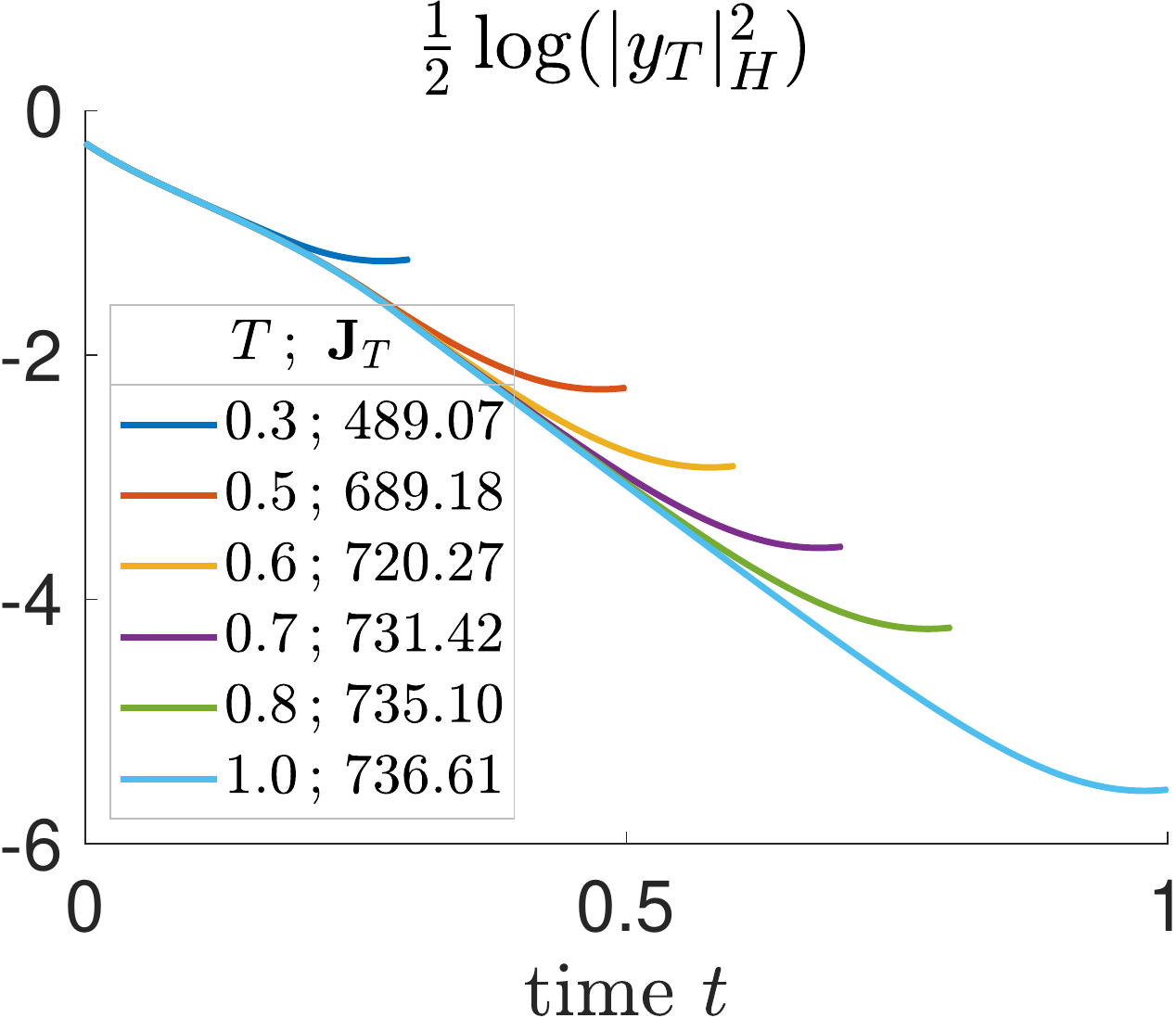}}
\caption{Evolution of the state of~\eqref{ExSch}.}
\label{fig:Schl-state}
\end{figure}
In particular, we see that the state converges to the equilibrium~$y_{\rm eq3}(x,t)\coloneqq\zeta_3=2$.
On the other hand, in Fig.~\ref{fig:Schl-cont} we see that the FTH optimal control solutions, in~$I=(0,{T})$, with ${T}={T}_i$ as
\begin{equation}\notag
({T}_1,{T}_2,{T}_3,{T}_4,{T}_5,{T}_6)\coloneqq(\tfrac{3}{10},\tfrac{5}{10},\tfrac{6}{10}, \tfrac{7}{10},\tfrac{8}{10},1),
\end{equation}
do likely approximate an ITH solution, which converges exponentially to zero, as wanted.

Now, unfortunately, we do not know an optimal solution~$(y_\infty,u_\infty)$ of the ITH, so we cannot compare it with the FTH ones as we did for~\eqref{ExAn} in Fig.~\ref{fig:polyA1n2} and for~\eqref{ExPer} in Figs.~\ref{fig:periodic_chi0} and~\ref{fig:periodic_chi1}. Thus, in order to support the likely ITH limit observed in Fig.~\ref{fig:Schl-cont}, we show the differences in Fig.~\ref{fig:Schl-conv_yu50}, restricted to the time interval~$(0,\tfrac{3}{10})=(0,{T}_1)$ (cf.~\eqref{weakconv-sr}).
\begin{figure}[ht]
\centering
\subfigure
{\includegraphics[width=0.45\textwidth]{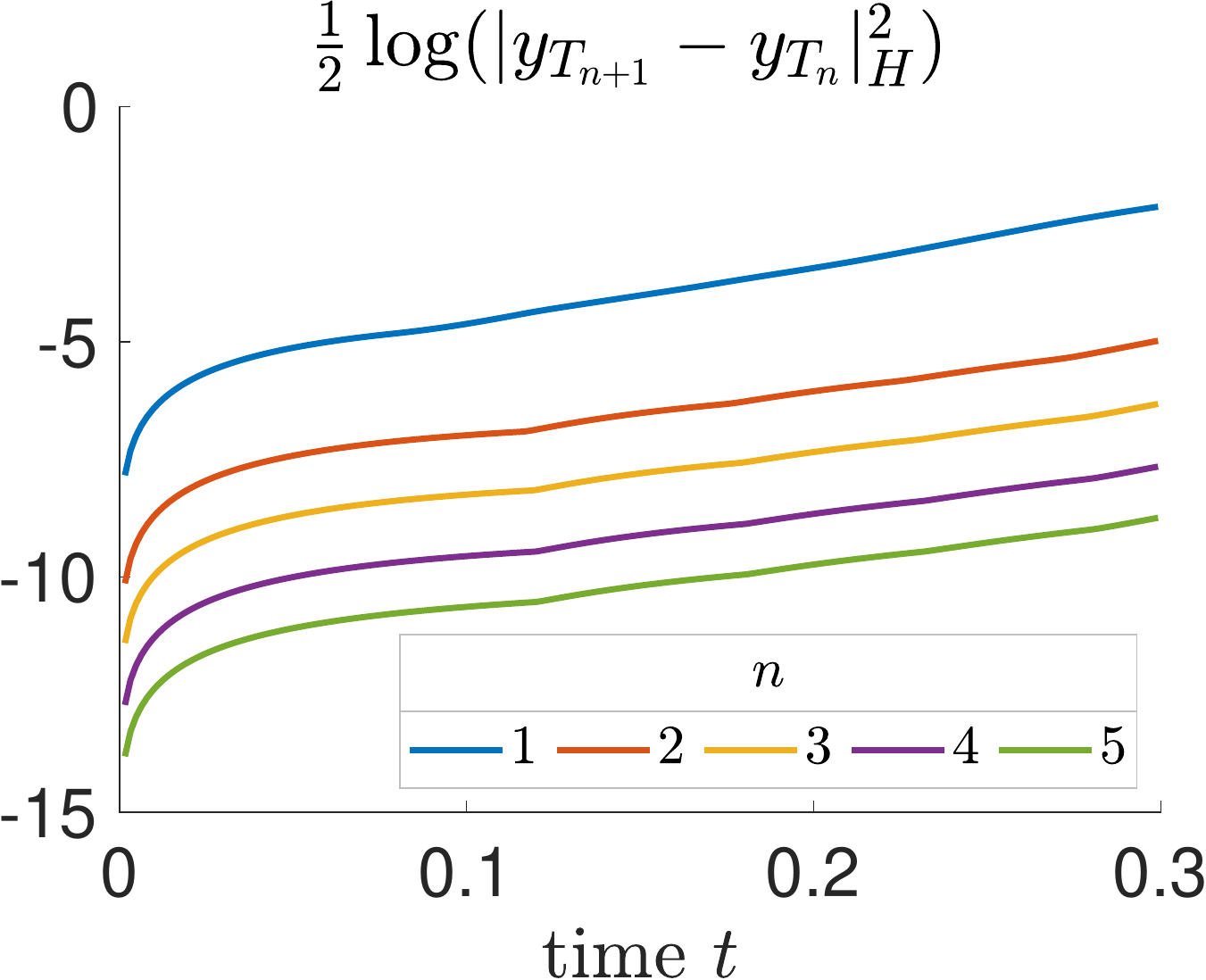}}
\quad
\subfigure
{\includegraphics[width=0.45\textwidth]{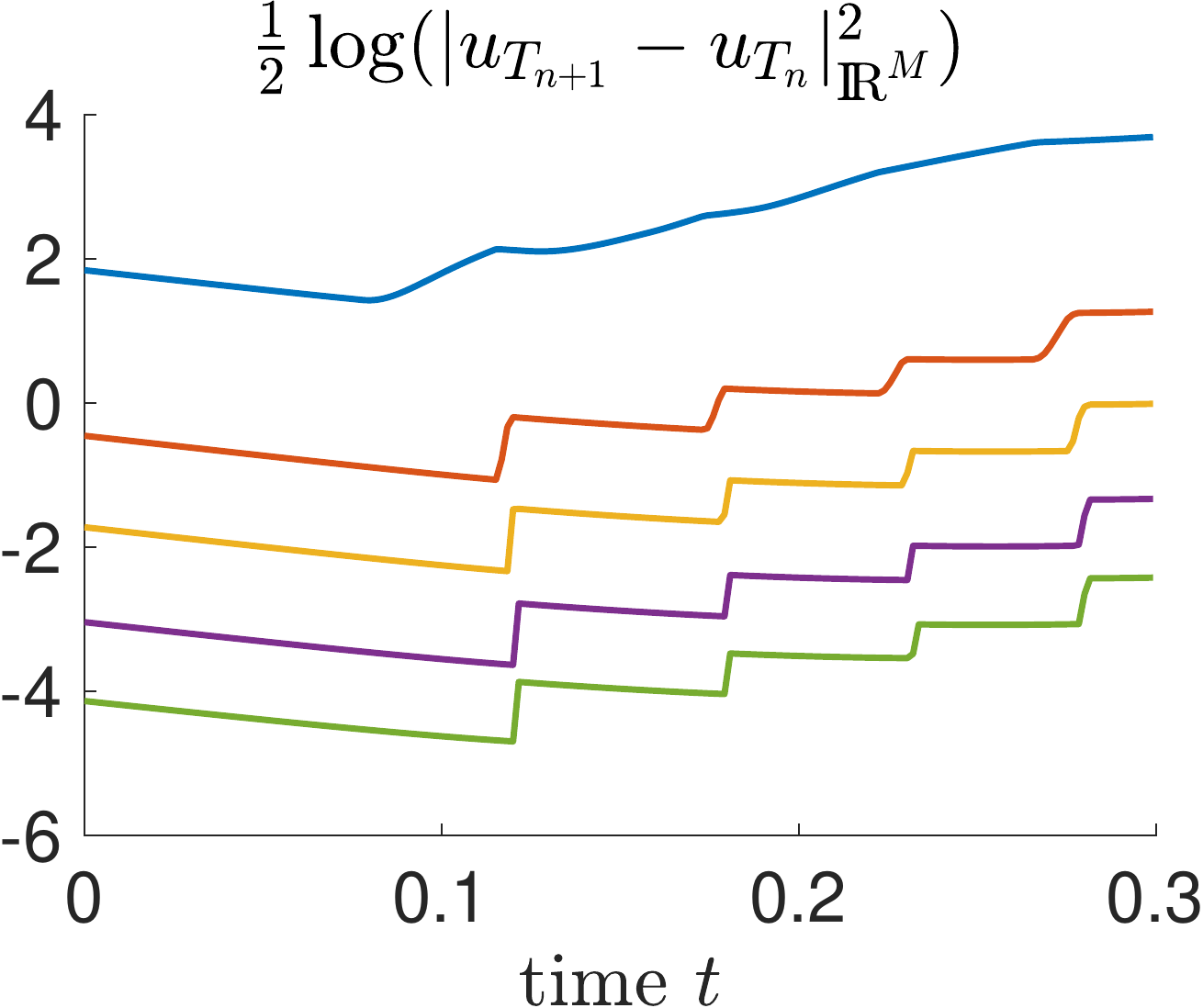}}
\caption{Convergence for~\eqref{ExSch}.}
\label{fig:Schl-conv_yu50}
\end{figure}

Finally, Fig.~\ref{fig:Schl_u50}  shows the evolution of the control coordinates (for the cases~${T}\in\{\tfrac{3}{10},1\}$).
\begin{figure}[ht]
\centering
\subfigure
{\includegraphics[width=0.45\textwidth]{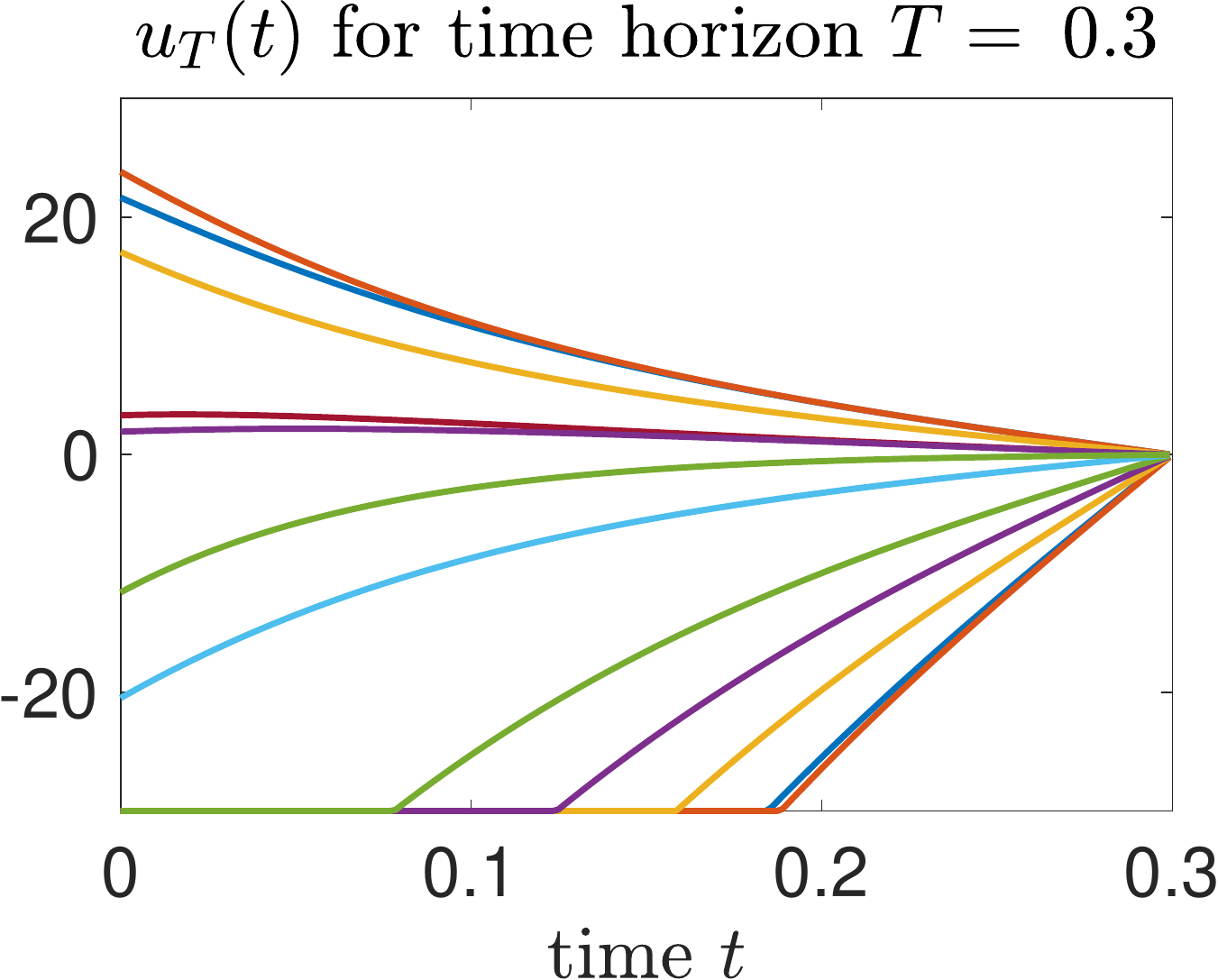}}
\quad
\subfigure
{\includegraphics[width=0.45\textwidth]{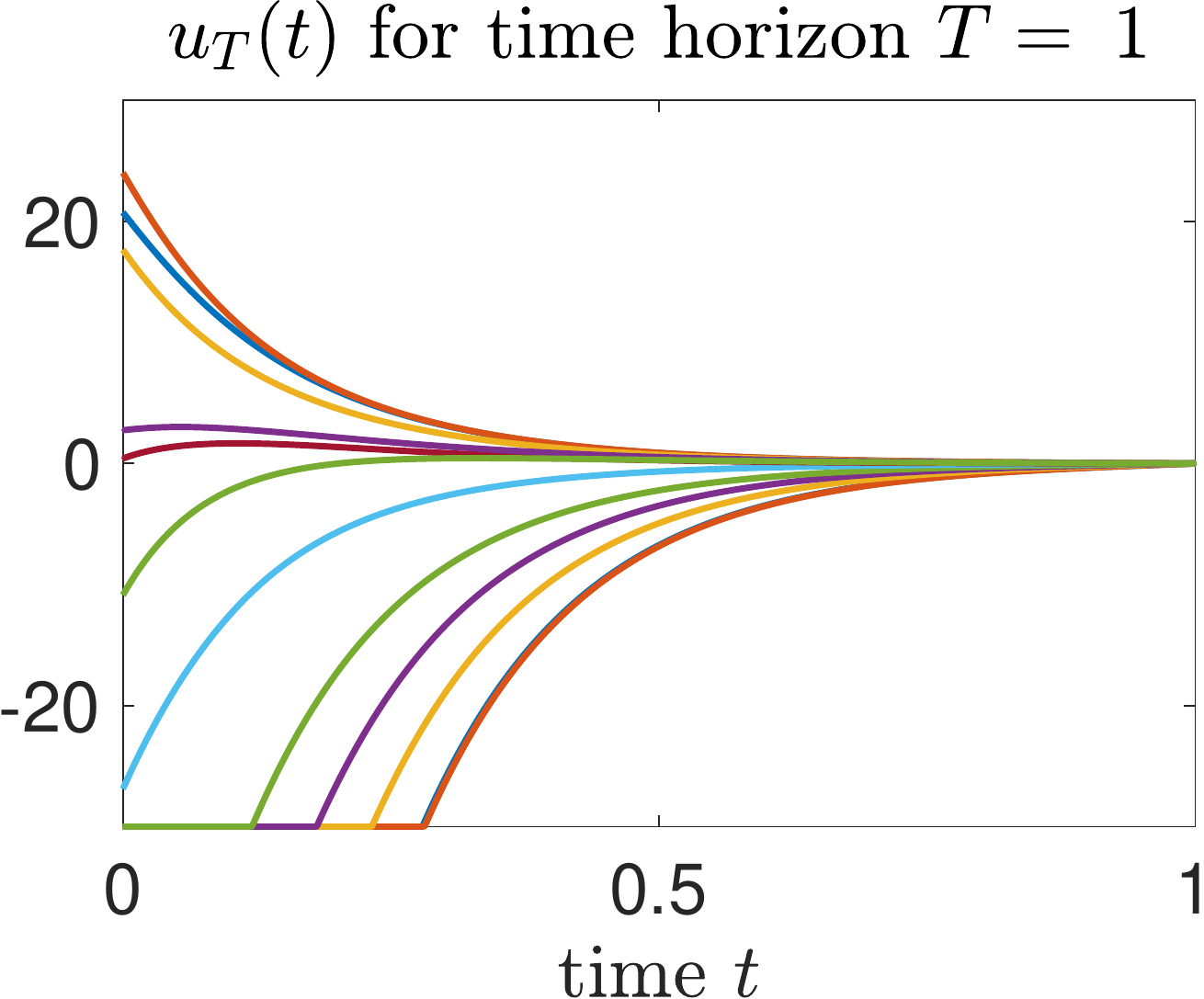}}
\caption{Control input for~\eqref{ExSch}. }
\label{fig:Schl_u50}
\end{figure}

\medskip\noindent
{\bf Remarks on the assumptions}
Recalling the well-posedness of  strong solutions we set~$\clH =H^1(\Omega)$, ~$\clX_I^u=L^2(I,\bbR^M)$ and~$\clX_I^y=\{y\in L^2(I,\rmD(A))\mid \dot y\in L^2(I,H)\}$, where~$\rmD(A)=\{w\in H^2(\Omega)\mid \tfrac{\p w}{\p x}(0)=0=\tfrac{\p w}{\p x}(1)\}$ is the domain of the shifted Neumann Laplacian~$A=-\nu\Delta+\Id$. Again, we can verify all the assumptions. Here, we only address Assumption~\ref{A:R}.
Observe that, now we cannot place our problem in the particular setting discussed in section~\ref{sS:necAR}, $\norm{\clR(y)}{\bbR}^2\le\fkD_2(\norm{\clQ(y)}{\bbR}^2)$, because the optimal cost does not vanish in the kernel~$\clQ^{-1}(\{0\})$ of~$\clQ=\gamma^\frac12\norm{P_{\clE_N} y}{H}$. Indeed, ~$\clQ^{-1}(\{0\})=\clE_N^\perp$ coincides with the orthogonal complement~$\clE_N^\perp$ of~$\clE_N$, in~$H=L^2(\Omega)$. Note that, every~$h\in\clE_N^\perp$ has zero mean, $(h,1)_H=0$, which is a property preserved by the Neumann Laplacian, but  not  by the nonlinearity~$g(y)\coloneqq-y(y+1)(y-2)$, with~$\zeta$ as in~\eqref{nuzeta}. For example, for~$\psi(x)\coloneqq\cos(N\pi x)\in\clE_N^\perp$, we find
\begin{equation}\notag
\xi\coloneqq(g(\psi),1)_H=(-\psi^3+\psi^2+2\psi,1)_H,
\end{equation}
and recalling that~$\psi^2=\frac{1+\cos(2N\pi x)}2$, we obtain~$\xi=\frac12$. Recall that, the constant function~$1$ is in~$\clE_N$.
This means that~$\fkV_s(\psi)\ne0$ and, since~$\clQ(\psi)=0$, we cannot find~$\clR$  satisfying both~$\norm{\clR(w)}{\bbR}^2\le\fkD_2(\norm{\clQ(w)}{\bbR}^2)$ and~$\fkV_s(w)\le \tfrac12\norm{\clR(w)}{\bbR}^2$ (cf. Assum.~\ref{A:exist-optim}).

Therefore, we need another argument to verify Assumption~\ref{A:R}. For this purpose, let $(y_T,u_T)\coloneqq(y_{(s,T)}^z,u_{(s,T)}^z)$ solve Problem~\ref{Pb:OCPI}.
We decompose the state as
$y_T=Y_T+q_T$, with~$q_T=P_{\clE_N}y_T$. We note that
\begin{equation}\notag
(g(y),y)_H=(-y^2+y+2,y^2)_H\le C_1\norm{y}{H}^2
\end{equation}
with~$C_1=\max\{-w^2+w+2\mid w\in\bbR\}$.
Thus, standard energy estimates, will lead us to
\begin{align}
&\hspace{-1em}\tfrac{\rmd}{\rmd t}\norm{y_T}{H}^2\le -\norm{y_T}{\clH}^2 +C_2\norm{y_T}{H}^2+C_2\norm{Bu_T}{H}^2\notag\\
&\hspace{-1em}\le -\norm{Y_T}{\clH}^2 +2C_2\norm{Y_T}{H}^2+2C_2\norm{q_T}{H}^2+C_2\norm{Bu_T}{H}^2\notag\\
&\hspace{-1em}\le -(\alpha_{N+1}-2C_2)\norm{Y_T}{H}^2+2C_2\norm{q_T}{H}^2+C_2\norm{Bu_T}{H}^2\notag
\end{align}
where~$\alpha_{N+1}=\nu N^2\pi^2+1$ is the~$(N+1)$st eigenvalue of~$A$
and~$\norm{y_T}{\clH}^2\coloneqq(Ay_T,y_T)_H$ is a norm equivalent to the usual $H^1(\Omega)$-norm.
 For large~$N$ we have~$\kappa\coloneqq\alpha_{N+1}-2C_1>0$. Then,  integrating over~$I^T=(s,T)$,  \begin{align}
&\norm{y_T(T)}{H}^2+ \kappa\norm{Y_T}{L^2(I^T,H)}^2\notag\\
&\le \norm{z}{H}^2+2C_2\norm{q_T}{L^2(I^T,H)}^2+C_2\norm{Bu_T}{L^2(I^T,H)}^2\notag\\
&\le \norm{z}{H}^2+C_3\fkV_s(z).\notag
\end{align}
By the results in~\cite[Thm.~2.1]{AzmiKunRod21-arx}, for sufficiently large~$M$ and~$C_u$ we have the existence of a  linear feedback operator the saturation of which gives us a feedback control, which stabilizes exponentially our system in the~$H$-norm. Then, necessarily~$\fkV_s(z)\le C_4\norm{z}{H}^2$, and it follows~$\norm{Y_T}{L^2(I^T,H)}^2\le C_5\norm{z}{H}^2$  and~$\norm{y_T}{L^2(I^T,H)}^2\le C_6\norm{z}{H}^2$.
Thus, we must have
$\norm{y_T(T^\circ)}{H}^2\le \frac{2}{T-s}C_6\norm{z}{H}^2$ for some~$T^\circ\in[\frac{T+s}2,T]$. Hence, we can choose a sequence as in Assumption~\ref{A:R}.

\black

\medskip\noindent
{\bf Remarks on computational details}
As in section~\ref{sS:Exscnonlin}, we found the minimizer by solving the first-order optimality system iteratively through Barzilai--Borwein time steps.
To solve the state and adjoint equations from the optimality system, as spatial discretization we used piecewise linear finite elements, and as temporal discretization we used a Crank--Nicolson--Adams--Bashford  scheme.

\section{Concluding remarks}\label{S:finalremks}
We have shown that for a class of systems (possibly unstable and nonlinear) we can approximate an optimal solution of a class of ITH optimal control problems by a limit of a sequence of analogous FTH optimal control problems. This has been validated by numerical simulations performed for both~{\sc ode} and~{\sc pde} models. In section~\ref{sS:Exscnonlin} we considered a nonlinear dynamics for which we could derive the exact analytic expression for the dynamics of the ITH optimal solutions. In section~\ref{sS:Exper} we addressed a time-periodic linear dynamics, and finally, the semilinear Schl\"ogl parabolic model was taken in section~\ref{sS:schlogl} under control box constraints.

The result has been derived under general assumptions, namely, we assume the ITH cost to be finite and several assumptions that can be verified for a large class of concrete systems. The only assumption that will likely be more difficult to check for a given concrete system is Assumption~\ref{A:R}. This assumption is, however, necessary for a meaningful class of problems, as discussed in section~\ref{sS:necAR}, and we have paid particular attention to its verification, for the problems considered in the numerical examples; the arguments in there can be used for other problems as well. Roughly speaking, the function~$\clQ$  in the integral penalization of the state should ``cover the/a most unstable part'' of the dynamics, which reminds us of detectability properties.

\bigskip\noindent
{\bf Acknowledgments.} The author acknowledges partial support from
the Upper Austria Government and the Austrian Science
Fund (FWF): P 33432-NBL.  
He also thanks Behzad Azmi and Karl Kunisch for fruitful discussions on points related with the addressed problem.

\bibliography{Fin2Inf}

\bibliographystyle{plainurl}

\end{document}